\newcounter{lemma}
\newtheorem{Theorem}{Theorem}
\newtheorem{Lemma}[lemma]{Lemma}
\newtheorem{Corollary}[lemma]{Corollary}
\newtheorem{Proposition}[lemma]{Proposition}
\newtheorem{theorem}{Theorem}
\theoremstyle{definition}
\newtheorem{Remark}[lemma]{Remark}
\def\X{\mathfrak{X}}
\def\H{\mathbb H}
\def\cH{\mathcal H}
\def\GL{\mathrm{GL}}
\def\tr{\operatorname{tr}}
\def\nm{\operatorname{nr}}
\def\disc{\operatorname{disc}}
\def\C{\mathbb C}
\def\A{\mathscr A}
\def\F{\mathcal F}
\def\O{\mathcal O}
\def\o{\mathfrak o}
\def\Q{\mathbb Q}
\def\QQ{\mathcal Q}
\def\R{\mathbb R}
\def\X{\mathfrak X}
\def\XDN{X^D_0(N)}
\def\Z{\mathbb Z}
\def\mod{\  \mathrm{mod}\ }
\def\Im{\mathrm{Im\,}}
\def\gen#1{\langle #1\rangle}
\def\JS#1#2{\left(\frac{#1}{#2}\right)}
\def\SL{\mathrm{SL}}
\def\Sp{\mathrm{Sp}}
\def\CM{\mathrm{CM}}
\def\Gal{\mathrm{Gal}}
\def\wt{\widetilde}
\def\M#1#2#3#4{\begin{pmatrix}#1&#2\\#3&#4\end{pmatrix}}
\def\SM#1#2#3#4{\left(\begin{smallmatrix}#1&#2\\#3&#4\end{smallmatrix}
  \right)}
\def\abs#1{\left|#1\right|}
\def\pair#1,#2{\left(#1,#2\right)}
\def\Vol{\operatorname{Vol}}
\def\x{\times}
\def\braces #1{\left\{#1\right\}}
\begin{document}

\title{Class number relations arising from intersections of Shimura curves and Humbert surfaces}

\author{Jia-Wei Guo}
\address{Department of Mathematics, National Taiwan
University, Taipei, Taiwan 10617}
\email{jiaweiguo312@gmail.com}

\author{Yifan Yang}
\address{Department of Mathematics, National Taiwan
University and National Center for Theoretical Sciences, Taipei,
Taiwan 10617}
\email{yangyifan@ntu.edu.tw}

\begin{abstract} By considering the intersections of Shimura curves
  and Humbert surfaces on the Siegel modular threefold, we obtain new
  class number relations. The result is a higher-dimensional analogue
  of the classical Hurwitz-Kronecker class number relation.
\end{abstract}

\thanks{The authors would like to thank Professor Fernando Rodriguez
  Villegas for bringing this problem to their attention and for
  sharing his private notes with them. The authors would also like to
  thank Professor Takao Yamazaki and Professor Takuya Yamauchi for
  many fruitful discussions. The authors are partially supported by
  Grant 102-2115-M-009-001-MY4 of the Ministry of Science and
  Technology, Taiwan (R.O.C.).
}
\maketitle

\section{Introduction} As arithmetic and algebraic objects, modular
forms and their generalizations, such as harmonic Maass forms, weakly
holomorphic modular forms, etc., play a significant role in number
theory. In many instances, class numbers emerge naturally as Fourier
coefficients of modular forms. For example, suppose that
$\theta(z)=\sum_{n\in\Z}e^{2\pi inz}$ is the classical Jacobi theta
function. Then the coefficients of the weight 
$3/2$ modular form $\theta(z)^3=\sum_nr_3(n)e^{2\pi inz}$ can be
represented by certain sums of Hurwitz class numbers $H(m)$, the
counting function of $\SL(2,\Z)$-equivalence class of positive
definite binary quadratic forms of discriminant $-m$, weighted by the
sizes of automorphism groups of quadratic forms. Moreover,
Zagier \cite{Zagier-half-integral-weight} showed that with $H(0)$
defined to be $-1/12$, the function
$\sum_{n\in\Z}H(n)e^{2\pi inz}$ is the holomorphic part of a harmonic
Maass form of weight $3/2$.  
Around the same time, motivated by Dirichlet's class number formula,
Cohen \cite{Cohen-modularforms} generalized the definition of the
Hurwitz class numbers and define $H(r,n)$ via values of Dirichlet
$L$-function for quadratic character
$\left(\frac{n}{\cdot}\right)$. He showed that each power series
of the form $\sum^\infty_{n=0}H(r,n)e^{2\pi inz}$ is a modular form of
weight $r+1/2$ on $\Gamma_0(4)$, which is referred to as Cohen's
Eisenstein series of weight $r+1/2$.

On the other hand, because CM-points on a modular curve or a Shimura
curve $X$ correspond to abelian varieties with extra 
endomorphisms, when we embed $X$ into a modular variety of higher
dimension, CM-points often lie on intersections of $X$ with other
special cycles of the modular variety. By analyzing what CM-point lie
on the intersection and comparing with the intersection number, one
obtain relations among class numbers. For example, the classical
Hurwitz-Kronecker class number relation
\begin{equation} \label{equation: Kronecker}
\sum_{x\in\Z}H(4n-x^2)+\sum_{dd'=n,~d,d'>0}\min(d,d')
=\begin{cases}
  -1/12, &\text{if }n=0, \\
  2\sum_{d|n}d, &\text{if }n>0, \end{cases}
\end{equation}
can be interpreted as a relation arising from the
intersection of two certain curves on $X_0(1)\times X_0(1)$ defined by
modular correspondence (see \cite{Gross-Keating}). Note that the
generating function for the right-hand side of \eqref{equation:
  Kronecker} is the Eisenstein series $-E_2(z)/12$ of weight $2$,
which is the holomorphic part of a harmonic Maass form.
As another important example, Hirzebruch and Zagier
\cite{Zagier-intersection} considered a certain series of curves
$T_1,T_2,\ldots$ that are images of modular curves or Shimura curves
on the Hilbert modular surface $\SL(2,\o)\backslash\H^2$, where
$\o$ is the ring of integers in $\Q(\sqrt p)$, $p\equiv 1\mod 4$,
and obtained formulas involving Hurwitz class numbers for the
intersection numbers of $T_m$ and $T_n$. They also showed that if we fix $m$,
then the generating series for the intersection numbers is a modular
form of weight $2$ and Nebentype on $\Gamma_0(p)$. This results in
identities relating sums of class numbers to Fourier coefficients of a
modular form of weight $2$ and Nebentype.

Note that in the two aforementioned examples, the generating series
for intersection numbers are a modular form and the holomorphic part
of a harmonic Maass form, respectively. These arithmetic and
geometric phenomenon of modular forms has been further explored and
generalized by Kudla and Milson, from the aspect of automorphic 
representations, as correspondence between spaces of higher genus
modular forms and intersections of special cycles on the quotients of
symmetric spaces for orthogonal groups and the unitary groups
\cite{Kudla-intersection-cycles, Oda-modularforms}.

In this paper, we shall consider the case of Shimura curves on the
Siegel modular threefold
$\A_2:=\operatorname{Sp}(4,\Z)\backslash\H_2$.
By considering the intersections of a Shimura curve with Humbert
surfaces on $\A_2$, we will obtain identities relating sums of Hurwitz
numbers to the Fourier coefficients of Cohen's Eisenstein series of
weight $5/2$.
To state our theorem, let us first recall some definitions about
quadratic forms. Let $D_0$ be a squarefree integer, and $D_0=p_1\ldots
p_k$ be its prime factorization. Here we are concerned with positive
definite binary quadratic forms $Q$ of discriminants $-16D_0$ such
that all integers represented by $Q$ are congruent to $0$ or $1$
modulo $4$. Such a quadratic form is either primitive or of the form
$4Q'$ for some quadratic form $Q'$ of discriminant $-D_0$, and the
latter case occurs only when $D_0\equiv 3\mod 4$.
Let $C_{-16D_0}$ and $C_{-D_0}$ (in the case $D_0\equiv 3\mod 4$) be
the class group of primitive binary quadratic forms of discriminant
$-16D_0$ and $-D_0$, respectively.
Define characters $\chi_{-4}$ and $\chi_{p_j}$, $j=1,\ldots, k$, of
$C_{-16D_0}$ by
\begin{equation} \label{equation: genus characters}
\chi_{-4}:Q\mapsto\left(\frac{-4}{a}\right),\qquad
\chi_{p_j}:Q\mapsto\begin{cases}
  \displaystyle \left(\frac{a}{p_j}\right),\ &\text{if }p_j
  \text{ is odd},\\
  \displaystyle\left(\frac{8}{a}\right), &\text{if }p_j=2,\end{cases}
\end{equation}
where $a$ is any positive integer represented by $Q$ satisfying
$(a,2D_0)=1$. These characters generate the group of genus characters
of $C_{-16D_0}$ with a single relation that either
$\chi_{-4}\chi_{p_1}\ldots\chi_{p_k}$ or $\chi_{p_1}\ldots\chi_{p_k}$
is the trivial character depending on whether the odd part of $D_0$ is
congruent to $1$ or $3$ modulo $4$ (see \cite[Theorem 3.15.]{Cox}).
The condition that all integers represented by $Q$ are congruent
to $0$ or $1$ modulo $4$ means that $\chi_{-4}(Q)=1$ so that the
number of $p_j$ such that $\chi_{p_j}(Q)=-1$ is even.
Thus, we may associate to $Q$ an indefinite quaternion algebra over
$\Q$ of discriminant
\begin{equation} \label{equation: DQ}
D=D_Q=\prod_{p_j:~\chi_{p_j}(Q)=-1}p_j.
\end{equation}
Set also
\begin{equation} \label{equation: NQ}
N=N_Q=\prod_{p_j:~\chi_{p_j}(Q)=1}p_j.
\end{equation}
Let $\O$ be an Eichler order of level $N$ in the quaternion algebra of
discriminant $D$ over $\Q$, and let $X_0^D(N)$ be the Shimura curve
associated to $\O$.

Similarly, we may define $\chi_{p_j}$ on $C_{-D_0}$ and the product
$\chi_{p_1}\ldots\chi_{p_k}$ is the trivial character. Thus, for a
quadratic form $Q$ of discriminant $-16D_0$ of the form $4Q'$, we may
also associate to $Q$ a Shimura curve $X_0^D(N)$ with $D$ and $N$
given by \eqref{equation: DQ} and \eqref{equation: NQ}, respectively.

Now for a negative discriminant $d$, let $h_{D,N}(d)$ be the number of
CM-points of discriminant $d$ on $X_0^D(N)$. Write $d$ as $d=f^2d_0$,
where $d_0$ is a fundamental discriminant, and define
\begin{equation} \label{equation: HDN}
  H_{D,N}(|d|)=\frac1{2^{\omega_{D_0}(d)}}\sum_{r|f}
  \frac1{e_{r^2d_0}}h_{D,N}(r^2d_0),
\end{equation}
where
$$
\omega_{D_0}(d)=\#\{p\text{ prime}:~p|D_0,~p\nmid d\}
$$
and
$$
e_{r^2d_0}=\begin{cases}
  3, &\text{if }r^2d_0=-3, \\
  2, &\text{if }r^2d_0=-4, \\
  1, &\text{else}. \end{cases}
$$
(In accordance with the usual notation for Hurwitz class numbers,
$H_{D,N}$ are defined for nonnegative integers.)
Note that we have the formula
$$
h_{D,N}(r^2d_0)=h(r^2d_0)\prod_{p|DN}m(\O,r^2d_0,p),
$$
where
$$
m(\O,r^2d_0,p)=\begin{cases}
  \displaystyle 1-\JS{d_0}p, &\text{if }p|D\text{ and }p\nmid r, \\
  0, &\text{if }p|D\text{ and }p|r, \\
  \displaystyle 1+\JS{d_0}p, &\text{if }p|N\text{ and }p\nmid r, \\
  2, &\text{if }p|N\text{ and }p|r, \end{cases}
$$
is the number of local optimal embeddings of discriminant $r^2d_0$
into $\O_p:=\O\otimes_\Z\Z_p$. Define also that
\begin{equation} \label{equation: HDN(0)}
H_{D,N}(0)=\frac12\operatorname{Vol}(X_0^D(N))
=-\frac{D_0}{12}\prod_{p|D}\left(1-\frac1p\right)
\prod_{p|N}\left(1+\frac1p\right)
\end{equation}
(noticing that when $D=N=1$, $H_{1,1}(0)=-1/12=H(0)$),
and $H_{D,N}(x)=0$ if $-x$ is not $0$ or a negative discriminant.
With notations given as above, our main result can now be stated
as follows.

\begin{Theorem} \label{theorem: class number relation}
  Assume that $D_0$ is a positive squarefree integer.
  Let $Q(x,y)=ax^2+2bxy+cy^2$ be a positive definite
  quadratic form of discriminant $-16D_0$ such that
  every integer represented by $Q$ is congruent to $0$ or $1$ modulo
  $4$. Let $D$ and $N$ be defined by 
  \eqref{equation: DQ} and \eqref{equation: NQ}, respectively.
  Assume that $D>1$. Then for all positive integers $n$ congruent to
  $0$ or $1$ modulo $4$, we have the
  class number relations
  \begin{equation}\label{equation: new class number relation}
    \sum_{\substack{u,v\in\Z\\ u\equiv an\mod 2,~v\equiv cn\mod2 }}
    H_{D,N}\left(D_0n-\frac{Q(v,u)}{4}\right)=a_n
    H_{D,N}(0),
  \end{equation}
  where $a_n$ are the coefficients in
  $$
  \sum_{n=0}^\infty a_nq^n=\mathcal H(z):=
  \theta(z)^5-20\theta(z)\frac{\eta(4z)^8}{\eta(2z)^4}
  =1-10q-70q^4-48q^5+\cdots,
  $$
  and $H_{D,N}(m)$ and $H_{D,N}(0)$ are defined by \eqref{equation:
    HDN} and \eqref{equation: HDN(0)}, respectively.
\end{Theorem}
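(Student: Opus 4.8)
The plan is to realize both sides of \eqref{equation: new class number relation} as intersection numbers on the Siegel threefold $\A_2$ and then compute that intersection number in two ways. First I would fix the Shimura curve $X_0^D(N)$ (with $D>1$) and describe its image under the standard morphism into $\A_2$ that sends a false elliptic curve (a QM abelian surface) to its underlying principally polarized abelian surface; call this image curve $Z$. The quadratic form $Q$ of discriminant $-16D_0$ should encode, via the recipe \eqref{equation: DQ}–\eqref{equation: NQ}, precisely the quaternionic data, so that $Z$ together with the choice of $Q$ determines an embedding into a fixed Humbert surface $\cH_{D_0}$ (the locus of abelian surfaces with real multiplication by the order of discriminant $D_0$, or rather with a specified symmetric endomorphism whose characteristic polynomial matches $Q$). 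The left-hand side of the theorem will then be the intersection number $Z \cdot T_n$, where $T_n$ is another Humbert-type cycle parametrized by the integer $n \equiv 0,1 \pmod 4$; the sum over $u,v$ with the congruence conditions $u \equiv an$, $v \equiv cn \pmod 2$ is the combinatorial bookkeeping that enumerates, for each CM abelian surface in the intersection, the number of ways its extra endomorphism ring realizes the pair $(Q, n)$, and the shift $D_0 n - Q(v,u)/4$ is exactly the discriminant of the resulting CM order. Each such intersection point contributes $H_{D,N}$ of that discriminant once one accounts correctly for elliptic points and the $2^{-\omega_{D_0}(d)}$ and $e_{r^2 d_0}^{-1}$ normalizations built into \eqref{equation: HDN}.

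Next, for the right-hand side, I would invoke the modularity of the generating series of intersection numbers of Humbert surfaces against a fixed cycle — this is the Kudla–Millson / Hirzebruch–Zagier philosophy cited in the introduction, specialized to $\Sp(4)$ and the curve $Z$. Concretely, $\sum_n (Z \cdot T_n) q^n$ should be a modular form of weight $5/2$ on $\Gamma_0(4)$ (the weight being half the codimension-plus-one count appropriate to a curve in a threefold meeting a divisor, matched against the Siegel–Weil / theta correspondence). Since that space is finite-dimensional and spanned by $\theta^5$ and $\theta \cdot \eta(4z)^8/\eta(2z)^4$ (the unique newform-type complement), it suffices to match finitely many Fourier coefficients. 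I would pin down the normalization by computing the $n=0$ term: the "intersection number" there is the volume term $\frac12 \Vol(X_0^D(N))$, which by \eqref{equation: HDN(0)} is exactly $H_{D,N}(0)$, forcing the constant term of the modular form to be $1$ after dividing through by $H_{D,N}(0)$ — hence the series must be $\mathcal H(z) = \theta^5 - 20\,\theta\,\eta(4z)^8/\eta(2z)^4$, which indeed has constant term $1$. One low-order coefficient (say $n=1$, giving $a_1 = -10$) computed directly from CM-point counts then nails down the linear combination.

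The technical heart, and the main obstacle, is the \emph{local intersection calculation}: showing that at each point of $Z \cap T_n$ the scheme-theoretic intersection multiplicity, summed appropriately over the CM abelian surfaces lying there and weighted by the orders of their automorphism groups, equals $H_{D,N}(D_0 n - Q(v,u)/4)$ with the precise $2^{-\omega_{D_0}(d)} \prod_{p \mid DN} m(\O, r^2 d_0, p)$ normalization. This requires (i) a careful deformation-theoretic / Serre–Tate analysis of how a Humbert surface meets the Shimura curve transversally or with a computable excess at CM points, including the supersingular and elliptic-point contributions, and (ii) matching the arithmetic of optimal embeddings of quadratic orders into the Eichler order $\O$ — the factors $m(\O, r^2 d_0, p)$ and the hypothesis $D > 1$ (which guarantees the Shimura curve is compact, so there are no cusps to worry about and the intersection number is genuinely finite and well-defined) — with the local contributions to the intersection number. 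I would handle the potential excess intersection along the boundary/supersingular locus by working on a suitable integral model of $\A_2$ and invoking the Gross–Keating formula for intersection multiplicities of special cycles, exactly as in the genus-one prototype \eqref{equation: Kronecker}; translating Gross–Keating from the quaternionic/unitary setting to this $\Sp(4)$ setting is where most of the work lies. Once the local identity is established, globalizing and summing over all intersection points yields \eqref{equation: new class number relation}, and comparison with the modular form $\mathcal H(z)$ finishes the proof.
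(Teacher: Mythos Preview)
Your overall strategy---realize both sides as the intersection number of the Shimura curve $\X$ with the Humbert divisor $G_n$ and compute that number in two ways---is the paper's strategy. But your proposed execution diverges from the paper on both sides, and one ingredient is genuinely missing.

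On the right-hand side you plan to invoke Kudla--Millson modularity for $\sum_n (Z\cdot T_n)q^n$ and then pin down the weight-$5/2$ form by matching low coefficients. The paper is far more direct: van der Geer's theorem says $G_n$ is the divisor of a Siegel modular form of weight $-a_n/2$, and its restriction along $\X$ is a modular form of weight $-a_n$ on $X_0^D(N)/W$; the zero-count formula then gives $I(\X,G_n)=a_nH_{D,N}(0)/|W|$ immediately, with no coefficient matching. On the left-hand side you propose deformation theory, Serre--Tate, integral models and Gross--Keating. The paper does none of this. It works entirely over $\C$ via the explicit period matrix $\tau_z$ of the quaternion modular embedding, proves transversality of $\cH_\ell$ with $\F$ by a bare-hands tangent-space computation, and identifies the intersection points as CM-points by analyzing their lattices of singular relations. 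The pair $(u,v)$ arises as $(\langle\ell_1,\ell\rangle_\Delta,\langle\ell_2,\ell\rangle_\Delta)$, and the factor $2^{-\omega_{D_0}(d)}$ comes from a congruence analysis (Lemma~\ref{lemma: Galois orbits}) of which $\Gal(L/K)$-orbits of CM-points admit a singular relation with prescribed $(u,v)$. Your mention of supersingular contributions is a red herring; nothing leaves characteristic zero.

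The real gap in your outline is that you treat the intersection as purely $0$-dimensional. In fact $\X$ is \emph{contained} in a Humbert surface $H_m$ whenever $Q$ primitively represents $m$, so some components of $G_n$ meet $\X$ in dimension $1$. The paper splits $I(\X,G_n)=I_0(\X,G_n)+I_1(\X,G_n)$; the $1$-dimensional piece $I_1$ is a volume term and supplies exactly the summands in \eqref{equation: new class number relation} with $Q(v,u)=4D_0n$, i.e., the $H_{D,N}(0)$ terms. Without separating and computing $I_1$ your count cannot match $a_nH_{D,N}(0)$.
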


\begin{Remark} We remark that $\mathcal{H}(z)/120$ is Cohen's
  Eisenstein series of weight $5/2$ defined in
  \cite{Cohen-modularforms}.
\end{Remark}

Note that when $D_0$ is congruent to $3$ modulo $4$ and $Q$ is of the
form $4Q'$, the conditions $u\equiv an,v\equiv cn\mod 2$ imply that
$u$ and $v$ are both even. Thus, we may write the identity as
$$
\sum_{u,v\in\Z}H_{D,N}(D_0n-4Q'(u,v))=a_nH_{D,N}(0).
$$

We now explain the geometric meaning of these identities in more
details. Let $\O$ be an Eichler order of level $N$ in the
indefinite quaternion algebra $B$ of discriminant $D$ over $\Q$.
Let $\mathcal Q_{D,N}$ be the set of all points in $\mathscr A_2$
whose corresponding principally polarized abelian surfaces have
quaternionic multiplication by $\O$ such that the Rosati involutions
coincides with a positive involution of the form
$\alpha\mapsto\mu^{-1}\overline\alpha\mu$ on $\O$ for some $\mu\in\O$
with $\mu^2+DN=0$, where $\overline\alpha$ denotes the conjugate of
$\alpha$ in $\O$. It can be shown that $\mathcal Q_{D,N}$ consists of
a finite number $r_{D,N}$ of irreducible components, each of which is
the image of the Shimura curve $X_0^D(N)$ under some natural map (see
\cite{Rotger-Igusa} for the case $N=1$). In
\cite{Rotger-Crelle,Rotger-TAMS}, Rotger gave upper and lower bounds
for the number $r_{D,1}$ and in some cases an exact formula for
$r_{D,1}$. To give an exact formula for $r_{D,1}$, Lin
and Yang \cite{Yang-QM} showed that each irreducible component in
$\mathcal Q_{D,1}$ can be associated with a positive definite
quadratic form $Q$ of discriminant $-16D$ such that every positive
integer $a$ represented by $Q$ satisfies
\begin{enumerate}
  \item $a\equiv 0,1\mod 4$, and
  \item $\JS{-D,a}\Q\simeq B$.
\end{enumerate}
Furthermore, they showed that the irreducible components in $\mathcal
Q_{D,1}$ are in one-to-one correspondence with
$\mathrm{GL}(2,\Z)$-equivalence classes of such quadratic forms.
Hence the problem of determining $r_{D,1}$ reduces to that of
counting certain $\mathrm{GL}(2,\Z)$-equivalence classes of quadratic
forms. Note that the work of Rotger
\cite{Rotger-Crelle,Rotger-TAMS,Rotger-Igusa} and Lin and Yang
\cite{Yang-QM} can be easily extended to the case $N>1$. Moreover,
the condition $\JS{-D,a}\Q\simeq B$ is equivalent to that
$\chi_{p_j}(Q)=-1$ for $p_j|D$ and $\chi_{p_j}(Q)=1$ for $p_j|N$.
Thus, each quadratic form in Theorem \ref{theorem: class number
  relation} corresponds to a unique Shimura curve $\mathfrak X$ in
$\mathcal Q_{D,N}$. 

Now recall that the Humbert surface $H_n$ of discriminant $n$ is
defined to be the locus in $\A_2$ of principally polarized
abelian surfaces having a symmetric (Rosati invariant) endomorphism of
discriminant $n$. The Humbert surfaces are hypersurfaces in Siegel's
modular threefold. For each positive integer $n$ congruent to $0$ or
$1$ modulo $ 4$, we defined the Humbert divisor by
\begin{equation} \label{equation: Gn}
G_n:=\sum_{k^2\vert n}\nu_{n/k^2}H_{n/k^2},
\end{equation}
where the sum runs over any positive integer $k$ such that $n/k^2$ is an
integer congruent to $0$ or $1$ modulo $4$, $\nu_1=\nu_4=1/2,$ and
$\nu_{n/k^2}=1$ for other cases. 
Denote by $\bar{\A}_2$ the Satake compatification of $\A_2$.
A fundamental result of van der Geer
\cite[Theorem 8.1 and Corollary 8.2]{Geer-Siegel-threefold} showed that
the classes $G_n$ span a $1$-dimensional subspace of the $4$-th homology group of $\bar{\A}_2$ with coefficients in $\Q$, and
$G_n$ is the divisor of some Siegel modular form of weight
$-a_n/2$, where $a_n$ are the integers given in Theorem \ref{theorem:
  class number relation}. Thus, if we let $I(\X,G_n)$ denote the
intersection number of a Shimura curve $\X$ in $\mathcal Q_{D,N}$ with
$G_n$, then there exists a constant $c$ such that
$$
I(\X,G_n)=ca_n
$$
for all $n$. In other words, the generating series
$$
\sum_{n=0}^\infty I(\X,G_n)e^{2\pi inz}
$$
is a modular form of weight $5/2$ on $\Gamma_0(4)$, agreeing with
the general results of Kudla and Millson
\cite{Kudla-intersection-cycles}. (Here the constant $I(\X,G_0)$ of
the generating series should be interpreted as the volume of $\X$.)
Using the fact that the restriction of a Siegel modular form of weight
$k$ to $\X$ is a modular form of weight $2k$ on $X_0^D(N)$, we can
determine $c$ in terms of the volume of $X_0^D(N)$. 

On the other hand, the $0$-dimensional components of the intersection of $\X$ and
$G_n$, being moduli points corresponding to abelian surfaces with
endomorphism algebras strictly larger than the quaternion algebra of
discriminant $D$ over $\Q$, must be CM-points on $\X$. By carefully
analyzing which CM-points lie on the intersections and determining
their multiplicities, we obtain the class number relations in our main
theorem.

\begin{Remark} Note that the relations \eqref{equation:
    new class number relation} hold only for the case $D>1$. When
  $D=1$, the Shimura curve $X_0^1(N)$ is the usual modular curve
  and we also need to take the contribution from the cusps into
  account. In \cite[Satz 4]{Hermann}, Hermann obtained a formula for
  the contribution from the cusps in the case the Humbert surface is
  $H_1$ and the quadratic form is of the form $4Q'$ for some quadratic
  form $Q'$ of discriminant $-p$, where $p$ is a prime congruent to
  $3$ modulo $4$. His proof used the fact that the Siegel
  modular form with divisor $H_1$ is the product of the ten theta
  functions of even characteristics. However, because we do not have
  such knowledge about Siegel modular forms with divisor $H_n$ for
  general $n$ (except for some small $n$, see \cite{Gruenewald}),
  it is not easy to extend Hermann's formula to the case of general
  $n$ and $D_0$.\footnote{Fernando Villegas has some partial
    result in his private notes.}
\end{Remark}

\section{Shimura curves on Siegel's modular threefold}
In this section we shall introduce and review some properties of
Shimura curves on Siegel's modular threefold. The materials are taken
from \cite{Yang-QM,Rotger-Crelle,Rotger-TAMS,Rotger-Igusa}.

\subsection{Abelian surfaces with quaternionic multiplication}

Let $\O$ be an Eichler order of squarefree level $N$ in an indefinite
quaternion algebra $B$ of discriminant $D$ over $\Q$. Fix an
embedding
$$
\phi:B\hookrightarrow \mathrm{M}(2,\R).
$$
As a complex torus, an abelian surface with quaternionic multiplication
by $\O$ is necessarily isomorphic to $A_z=\C^2/\Lambda_z$ for some
$z\in\H$, where $\Lambda_z=\phi(\O)v_z$ with
$v_z=\left(\begin{smallmatrix} z\\ 1
\end{smallmatrix}\right).
$
Such a complex torus $A_z$ can always be principally
polarized. Namely, choose a pure quaternion $\mu$ in $\O$ such that
$\mu^2+DN=0$ and the $(2,1)$-entry of $\phi(\mu)$ is positive. We
define $E_\mu:\Lambda_z\x\Lambda_z\rightarrow\Z$ by
$$
E_\mu(\phi(\alpha)v_z,\phi(\beta)v_z)=\tr(\mu^{-1}\alpha\overline{\beta})
$$
and extend the definition of $E_\mu$ $\R$-bilinearly to $\C^2$. 
It can be shown that $E_\mu$ is a Riemann form on $\Lambda_z$ and the
corresponding polarization $\rho_\mu$ of $A_z$ is
principal. Conversely, if $A_z$ is a simple abelian surface, then all
principal polarizations of $A_z$ arise in this way.

Note that the polarization $\rho_\mu$ has the property that the
restriction of the Rosati involution with respect to $\rho_\mu$ to
$\O$ coincides with the involution
$\alpha\mapsto\mu^{-1}\overline\alpha\mu$, where $\overline\alpha$
denotes the quaternionic conjugate of $\alpha$. Thus, if we let
$\QQ_{D,N}$ denote the set of moduli points on $\A_2$ whose
corresponding principally polarized abelian surfaces have quaternionic
multiplication by $\O$ such that the Rosati involution restricted to
$\O$ coincides with $\alpha\mapsto\eta\overline\alpha\eta^{-1}$ for
some $\eta\in\O$ satisfying $\eta^2+DN=0$, then
$$
\QQ_{D,N}=\bigcup_\mu\X_\mu,
$$
where
$$
\X_\mu=\{(A_z,\rho_\mu):~z\in X_0^D(N)\},
$$
and $\mu$ are elements of $\O$ such that $\mu^2+DN=0$ and the
$(2,1)$-entry of $\phi(\mu)$ is positive. 

In \cite{Yang-QM}, in order
to determine the exact number of Shimura curves in $\QQ_{D,1}$, Lin
and Yang showed that there is a one-to-one correspondecne between
Shimura curves in $\QQ_{D,1}$ and 
$\mathrm{GL}(2,\Z)$-equivalence classes of positive definite binary
quadratic forms of discriminant $-16D$ such that all integers $a$
represented by the quadratc form satisfy $a\equiv 0,1\mod 4$ and
$\JS{-D,a}\Q\simeq B$. These quadratic forms arise
from singular relations satisfied by Shimura curve $\X_\mu$, which we
now review in the next section.

\subsection{Quadratic forms associated to Shimura curves}

In this section, we shall review the definition of quadratic forms
associated to Shimura curves in $\QQ_{D,N}$. These quadratic forms
appeared in the works of Hashimoto \cite{Hashimoto-modular-embedding}
and Runge \cite{Runge-endomorphism} and were studied in more details
in \cite{Yang-QM}.

Let $(A,\rho)$ be a principally polarized abelian surface over
$\C$. The polarization $\rho$ induces an involution $f\mapsto
f^\dagger$, called the Rosati involution, on the endomorphism ring of
$A$. In terms of the Riemann form $E$ associated to $\rho$, this
involution is characterized by the property that
$$
E(f\omega_1,\omega_2)=E(\omega_1,f^\dagger\omega_2)
$$
for all periods $\omega_1$ and $\omega_2$ of $A$. An endomorphism $f$
of $A$ is 
called symmetric or Rosati invariant if $f=f^\dagger.$ 
The symmetric endomorphisms can be described in terms of the period
matrix of $A$ as follows.

Let $\tau=\SM {\tau_1}{\tau_2}{\tau_2}{\tau_3}\in \H_2$ be a
normalized period matrix for $(A,\rho)$. An endomorphism $f$ of $A$
can be represented by a matrix $R_f\in \mathrm{M}(2,\C)$ such that 
\begin{equation}\label{Rf Mf}
R_f(\tau,1_2)=(\tau,1_2)M_f
\end{equation}
for some $M_f\in \mathrm M(4,\Z)$. Then $f$ is symmetric if and only
if $M_f$ satisfies
$$
M_f^t\M{0_2}{1_2}{-1_2}{0_2}=\M{0_2}{1_2}{-1_2}{0_2}M_f,
$$
i.e., if and only if $M_f$ is of the form
$$
M_f=\begin{pmatrix}
a_1&a_2&0&b\\
a_3&a_4&-b&0\\
0&c&a_1&a_3\\
-c&0&a_2&a_4
\end{pmatrix}
$$
for some integers $a_j, b$ and $c$. Then \eqref{Rf Mf} implies that
$$
a_2\tau_1+(a_4-a_1)\tau_2-a_3\tau_3+b(\tau^2_2-\tau_1\tau_3)+c=0.
$$
Note that since $f\in\Z$ if and only if $a_2=a_3=b=c=0$ and $a_1=a_4$,
the relation is nontrivial if and only if $f\notin\Z$.

Conversely, if $\tau\in\H_2$ satisfies
\begin{equation}\label{equation: singular relation}
a\tau_1+b\tau_2+c\tau_3+d(\tau^2_2-\tau_1\tau_3)+e=0,\ \ \ a, b, c, d,e\in \Z,
\end{equation}
then its associated abelian surface has
an endomorphism $f$ with
\begin{equation}\label{matrix M}
M_f=\begin{pmatrix}
0&a&0&d\\
-c&b&-d&0\\
0&e&0&-c\\
-e&0&a&b
\end{pmatrix}.
\end{equation}

A relation of this type (\ref{equation: singular relation}) for
$\ell=(a,b,c,d,e)\in\Z^5$ is called a singular relation satisfied by
$\tau$ (or by its associated abelian surface $(A,\rho)$). A singular
relation is called primitive if 
$\mathrm{gcd}(a,b,c,d,e)=1$. Note that the matrix $M_f$ above
satisfies 
$
x^2-bx+ac+de=0.
$
Thus, the discriminant for a singular relation  $\ell=(a,b,c,d,e)$ is defined naturally as the discriminant of the polynomial, i.e., as 
$$\Delta(\ell)=b^2-4(ac+de).$$
Then the property that $\Im\tau$ is positive definite implies that
$\Delta(\ell)>0$. We also remark that
Humbert proved that under the natural projection
$\H_2\to\A_2$, all primitive singular relations with the same
discriminant $n$ of the form \eqref{equation: singular relation}
defines the same zero locus in $\A_2$, which is called the Humbert surface
$H_n$ of discriminant $n$.

Observe that for a given $\tau\in\H_2$, if we let
$\mathcal{L}$ be the set of singular 
relations satisfied by $\tau$, then 
$\Delta$ defines a positive definite quadratic form on $\mathcal L$
so that $\mathcal L$ becomes a positive definite lattice.
Let $\gen{\cdot,\cdot}_\Delta$ denote the bilinear form associated to $\Delta$, i.e.,
\begin{equation}\label{equation: inner P}
\gen{\ell_1,\ell_2}_\Delta=\frac{1}{2}(\Delta(\ell_1)+\Delta(\ell_2)-\Delta(\ell_1+\ell_2)).
\end{equation}
By a direct computation, we find that
\begin{equation}\label{congruence relation}
\gen{\ell_1,\ell_2}^2_\Delta\equiv\Delta(\ell_1)\Delta(\ell_2)\mod 4
\end{equation}
for all singular relations $\ell_1$, $\ell_2$ satisfied by a given
$\tau\in\H_2$.

We now consider the case $(A,\rho)=(A_z,\rho_\mu)\in\X_\mu$,
where $\X_\mu$ is a Shimura curve in $\QQ_{D,N}$. Since the Rosati
involution restricted to the Eichler order $\O$ coincides with the
involution
$
\alpha\mapsto\mu^{-1}\bar{\alpha}\mu,
$
an element $\alpha$ of $\O$ is Rosati invariant if and only if
$$
\alpha\in\mu^\perp=\braces{\beta\in\O:\tr(\beta\bar{\mu})=0}.
$$
Therefore, for all $(A,\rho)$ in $\X_\mu$, the lattice of singular
relations satisfied by $(A,\rho)$ contains a sublattice
isomorphic to $\mu^\perp/\Z$, and there is a natural quadratic form on
$\mu^\perp/\Z$ defined by
$\alpha+\Z\mapsto\operatorname{disc}(\alpha)
=\tr(\alpha)^2-4\nm(\alpha)$. Consequently, we may associate to
$\X_\mu$ a $\GL(2,\Z)$-equivalence class of quadratic forms
$Q_\mu(x,y)$ defined by
$$
Q_\mu(x,y)=\operatorname{disc}(\alpha x+\beta y)
=\tr(\alpha x+\beta y)^2-4\nm(\alpha x+\beta y)
$$
where $\{\alpha,\beta\}$ is a $\Z$-basis for $\mu^\perp/\Z$.

\begin{theorem}[{\cite[Theorem 3 and Proposition 27]{Yang-QM}}]
  \label{theorem: correspondence}
  \begin{enumerate}
  \item
    The quadratic form $Q_\mu$ above has the property that all
    positive integers $a$ represented by $Q_\mu$ satisfy
    \begin{enumerate}
    \item
      $a\equiv 0,1\mod 4$, and
    \item that the quaternion algebra $\JS{-DN,a}\Q$ has discriminant
      $D$.
    \end{enumerate}
  \item Conversely, assume that $N$ is squarefree. Then each positive
    definite binary quadratic forms $Q$ of discriminant $-16DN$ having
    the two properties in Part (1) is $\GL(2,\Z)$-equivalent to
    $Q_\mu$ for some $\mu\in\O$.
  \item Assume that $N$ is squarefree. Then the correspondence
    $\X_\mu\longleftrightarrow Q_\mu$ between Shimura curves in
    $\QQ_{D,N}$ and $\GL(2,\Z)$-equivalence
    classes of quadratic forms of discriminant $-16DN$ having the
    properties in Part (1) is one-to-one and onto.
  \item If $Q_\mu(x,y)$ is not ambiguous, i.e., if $Q_\mu(-x,y)$ is
    not $\SL(2,\Z)$-equivalent to $Q_\mu(x,y)$, then $\X_\mu\simeq
    X_0^D(N)/\gen{w_{DN}}$. If $Q_\mu$ is ambiguous, then $Q_\mu$
    primitively represents $m$ or $4m$ for some positive divisor $m$ of $DN$
    and we have $\X_\mu\simeq X_0^D(N)/\gen{w_m,w_{DN}}$. Here $w_n$
    denotes the Atkin-Lehner operator on $X_0^D(N)$ induced by an
    element of norm $n$ in $\O$.
  \end{enumerate}
\end{theorem}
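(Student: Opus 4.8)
The statement to prove is Theorem~\ref{theorem: correspondence}, the four-part correspondence between Shimura curves $\X_\mu$ in $\QQ_{D,N}$ and $\GL(2,\Z)$-equivalence classes of quadratic forms of discriminant $-16DN$. Here is how I would organize a proof.

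\medskip

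\noindent\textbf{Part (1): properties of $Q_\mu$.}
The plan is to work directly with the quadratic form $Q_\mu(x,y)=\disc(\alpha x+\beta y)=\tr(\alpha x+\beta y)^2-4\nm(\alpha x+\beta y)$ on the rank-$2$ lattice $\mu^\perp/\Z$. First I would verify that $Q_\mu$ has discriminant $-16DN$. Since $\{\alpha,\beta\}$ is a $\Z$-basis of $\mu^\perp/\Z$, a direct computation expresses the discriminant of $Q_\mu$ in terms of the Gram matrix of the reduced-norm form restricted to $\mu^\perp$; the key input is that $\O$ has reduced discriminant $DN$ (Eichler order of level $N$ in a quaternion algebra of discriminant $D$), so $\det$ of the norm form on $\O$ is $(DN)^2$, and restricting to the trace-zero-against-$\mu$ sublattice with $\mu^2=-DN$ produces the factor $16DN$. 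For property~(a), a positive integer $a$ represented by $Q_\mu$ is $a=\disc(\gamma)=\tr(\gamma)^2-4\nm(\gamma)$ for some $\gamma\in\mu^\perp$, and $\tr(\gamma)^2\equiv 0,1\bmod 4$ gives $a\equiv 0,1\bmod 4$ immediately. For property~(b), if $a=\disc(\gamma)>0$ then $\gamma$ generates a real quadratic subfield $K=\Q(\gamma)$ of $B$ with $\gamma^2-\tr(\gamma)\gamma+\nm(\gamma)=0$, and since $\gamma\in\mu^\perp$ one has $\tr(\gamma\bar\mu)=0$, so $\mu$ and $\gamma$ anticommute up to the trace; this lets me exhibit $B$ as the quaternion algebra $\JS{\disc(\gamma),-DN}{\Q}\simeq\JS{a,-DN}{\Q}\simeq\JS{-DN,a}{\Q}$ (up to the standard symbol identities), whose discriminant is therefore $D$ because $B$ has discriminant $D$.

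\medskip

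\noindent\textbf{Part (2): surjectivity onto quadratic forms.}
Given a positive definite $Q$ of discriminant $-16DN$ with properties (a) and (b), I must produce $\mu\in\O$ with $\mu^2+DN=0$ so that $Q\sim Q_\mu$. The strategy is to embed the lattice determined by $Q$ into the trace-zero part of $\O$. From property~(b), for any $a$ primitively represented by $Q$ the algebra $\JS{-DN,a}{\Q}$ is isomorphic to $B$, which provides a pure quaternion $\mu_0$ with $\mu_0^2=-DN$ and a second generator realizing a square root of $a$; the pair spans a rank-two quadratic submodule of the trace-zero lattice $B^0$ whose discriminant form matches $Q$. The arithmetic heart is to show this embedding can be chosen to land in the Eichler order $\O$ (not merely in $B$) with $\mu\in\O$: this is governed by local conditions at the primes $p\mid DN$, and the squarefreeness of $N$ together with property~(a) (controlling behaviour at $2$) guarantees the local embeddings exist and glue. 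I would invoke the optimal-embedding / local-global theory for Eichler orders here. Then $Q_\mu$ recovers $Q$ up to $\GL(2,\Z)$ because both arise as the discriminant form on rank-two lattices of the same discriminant admitting the same embedding data.

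\medskip

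\noindent\textbf{Parts (3) and (4): bijectivity and identification of the quotient.}
For Part~(3), injectivity means distinct Shimura curves give inequivalent forms; I would show $\X_\mu=\X_{\mu'}$ forces $\mu'=\pm u\mu u^{-1}$ or a conjugate/unit relation inside $\O$, and that such transformations induce a $\GL(2,\Z)$-change of basis on $\mu^\perp/\Z$, hence $Q_\mu\sim Q_{\mu'}$; conversely equivalent forms yield the same curve. Combined with Parts (1) and (2) this gives the claimed one-to-one and onto correspondence. For Part~(4), the map $\X_\mu\to\A_2$ factors through $X_0^D(N)$ modulo the Atkin--Lehner action induced by normalizing elements of $\O$. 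The stabilizer of the curve is generated by the full Atkin--Lehner group $\gen{w_{DN}}$ together with any extra involution $w_m$ coming from an element of norm $m\mid DN$ that conjugates $\mu$ to an $\O$-equivalent element; I would show such an extra $w_m$ exists exactly when $Q_\mu$ is ambiguous, i.e.\ when $Q_\mu(-x,y)\sim_{\SL}Q_\mu(x,y)$, and that ambiguity forces primitive representation of $m$ or $4m$ for a divisor $m\mid DN$ (the $4m$ case absorbing the mod-$4$ condition on representations). Matching the two cases gives $\X_\mu\simeq X_0^D(N)/\gen{w_{DN}}$ or $X_0^D(N)/\gen{w_m,w_{DN}}$.

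\medskip

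\noindent\textbf{Main obstacle.}
The delicate step is the local-global embedding argument in Part~(2): ensuring that a global quaternion realizing the quadratic form $Q$ can be chosen to lie in the specific Eichler order $\O$ of level $N$ with $\mu^2=-DN$, and correctly handling the prime $2$ (where discriminant $-16DN$ and the $a\equiv 0,1\bmod 4$ condition interact). Getting the squarefree-$N$ hypothesis to do exactly the work needed at the ramified and Eichler primes, and matching the resulting local embedding numbers against the structure of $Q$, is where the real content lies; the rest is bilinear-form bookkeeping and standard Shimura-curve quotient theory.
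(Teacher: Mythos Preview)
The paper does not supply its own proof of this theorem: it is quoted verbatim from \cite{Yang-QM} (Theorem~3 and Proposition~27 there), with only the remark that the argument for $N=1$ extends to squarefree $N$. So there is no in-paper proof to compare against line by line. That said, the paper reproduces enough of the machinery of \cite{Yang-QM} (notably Lemma~\ref{lemma: explicit mu perp}) that one can see how the cited proof of Part~(2) actually runs, and it is \emph{not} the abstract local--global embedding argument you propose.

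In \cite{Yang-QM} the surjectivity in Part~(2) is obtained by an explicit construction: given an admissible $Q$, choose a prime $p$ represented by $Q$ (so that $B\simeq\JS{-DN,p}{\Q}$), write down by hand an Eichler order $\O$ of level $N$ in this model together with $\mu=I$, and compute that $Q_\mu$ is $px^2+4sDNxy+4tDNy^2$ (or the $4Q'$ variant). The punchline is then purely classical: two primitive forms of the same discriminant that both primitively represent the \emph{prime} $p$ are automatically $\GL(2,\Z)$-equivalent, so $Q_\mu\sim Q$. This sidesteps entirely the delicate ``embed the rank-two lattice into $\O$ locally and glue'' step you flag as the main obstacle. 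Your approach may well be made to work via strong approximation for the indefinite Eichler order, but you should be aware that the burden is heavier than embedding a quadratic order: you must control the full form on $\mu^\perp/\Z$, not just the existence of some $\mu$ with $\mu^2=-DN$. The explicit route in \cite{Yang-QM} avoids this by pinning everything to a single represented prime.

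Your sketches for Parts~(1), (3), and (4) are in the right spirit; in particular the anticommutation argument for (1b) (replace $\gamma$ by its trace-zero part $\gamma-\tr(\gamma)/2$ and pair with $\mu$) is exactly how one identifies $B$ with $\JS{-DN,a}{\Q}$.
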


Note that in \cite{Yang-QM}, we only proved the case $N=1$, but it is
easy to see that the proof also works
for the case of squarefree $N$ with $(D,N)=1$.

\begin{Remark}
  Note that if $D_0$ is not squarefree, then there may exist a
  quadratic form of discriminant $-16D_0$ that is neither primitive
  nor of the form $4Q'$ for some primitive $Q'$, but all integers
  represented by it are congruent to $1$ or $4$ modulo $4$. In such a
  case, there may exist different and nonisomorphic Shimura
  curves in $\A_2$ with this quadratic form. See \cite[Example
  13]{Runge-endomorphism} for an example.
\end{Remark}


\section{Singular relations satisfied by points on Shimura curves}

As before, we assume that $D_0$ is a positive
squarefree integer. For a positive definite binary quadratic form $Q$
of discriminant $-16D_0$ such that all integers represented by $Q$
are congruent to $0$ or $1$ modulo $4$, define $D$ and $N$ by
\eqref{equation: DQ} and \eqref{equation: NQ}, respectively.
Let $\X$ be the Shimura curve in $\QQ_{D,N}$ associated to $Q$ as
described in Theorem \ref{theorem: correspondence}.
To facilitate our discussion on the intersection of $\X$
and Humbert surfaces, in this section, we shall make
singular relations satisfied by points on $\X$ explicitly.
We recall that $Q$ is either primitive or is $4Q'$ for some quadratic
form $Q'$ of discriminant $-D_0$.

\subsection{Singular relations satisfied by Shimura curves}
\begin{Lemma}[{\cite[Lemma 38]{Yang-QM}}]
  \label{lemma: explicit mu perp}
  \begin{enumerate}
  \item
    Assume that $Q$ is primitive. Let $p$ be a prime represented by
    $Q$ such that $p\nmid DN$ (so that $p\equiv 1\mod 4$ and the
    quaternion algebra $\JS{-DN,p}\Q$ has discriminant $D$). Let
    $B=\JS{-DN,p}\Q$ be the quaternion algebra generated by $I$ and
    $J$ with $I^2=-DN$, $J^2=p$, and $IJ=-JI$. Choose an even integer
    $s$ such that $s^2DN+1\equiv 0\mod p$. Then the $\Z$-module $\O$
    spanned by
    \begin{equation}\label{equation: Eichler basis 1}
    e_1=1, \quad e_2=\frac{1+J}2, \quad
    e_3=\frac{I+IJ}2, \quad e_4=\frac{sDNJ+IJ}p,
    \end{equation}
    is an Eichler order of level $N$ in $B$. Moreover, with the choice
    $\mu=I$, the $\Z$-module $\mu^\perp/\Z$ is spanned by $e_2$ and
    $e_4$ and hence the quadratic form $Q_\mu$ defined by
    $Q_\mu(x,y)=\disc(e_2x+e_4y)$ is $$px^2+4sDNxy+4tDNy^2$$ (which is
    necessarily $\GL(2,\Z)$-equivalent to $Q$ since $p$ is a prime),
    where $t=(s^2DN+1)/p$.
  \item
    Assume that $Q=4Q'$ for some quadratic form $Q'$ of discriminant
    $-DN$. Let $p$ be a prime represented by $Q'$ and
    $B=\JS{-DN,p}\Q$. Let $s$ be an odd integer such that
    $s^2DN+1\equiv 0\mod 4p$. Then the $\Z$-module $\O$ spanned by
    \begin{equation}\label{equation: Eichler basis 2}
    e_1=1, \quad e_2=\frac{1+I}2, \quad
    e_3=J, \quad e_4=\frac{sDNJ+IJ}{2p}
    \end{equation}
    is an Eichler order of level $N$ in $B$. Moreover, with the choice
    $\mu=I$, the $\Z$-module $\mu^\perp/\Z$ is spanned by $e_3$ and
    $e_4$ and hence the quadratic form $Q_\mu$ is
    $$4px^2+4sDNxy+4tDNy^2$$ (which is $\GL(2,\Z)$-equivalent to $Q$),
    where $t=(s^2DN+1)/4p$.
  \end{enumerate}
\end{Lemma}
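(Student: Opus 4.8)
The plan is to verify, in each of the two cases, the assertions of the lemma directly: that $\O$ is a $\Z$-order in $B=\JS{-DN,p}\Q$, that its reduced discriminant equals $DN$ (so that $\O$ is an Eichler order of level $N$, using that $D$ and $N$ are coprime with $N$ squarefree), that $\mu^\perp/\Z$ is spanned by the indicated pair, and that $Q_\mu$ has the stated shape. Throughout one uses $I^2=-DN$, $J^2=p$, $IJ=-JI$ and writes $\tr$, $\nm$ for the reduced trace and norm on $B$.

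\textbf{The order property.} First one checks that the $e_i$ are integral over $\Z$. In Part (1), $\tr(e_1)=2$, $\tr(e_2)=1$, $\tr(e_3)=\tr(e_4)=0$, and $\nm(e_2)=(1-p)/4$, $\nm(e_3)=DN(1-p)/4$, $\nm(e_4)=-DNt$; these lie in $\Z$ precisely because $p\equiv1\mod4$ (a consequence of $p$ being represented by $Q$) and $p\mid s^2DN+1$. In Part (2) one has $\nm(e_2)=(1+DN)/4$ instead, which is integral because $D_0\equiv3\mod4$ forces $DN=D_0\equiv3\mod4$, while $\nm(e_4)=-DNt$ with $t=(s^2DN+1)/4p\in\Z$, the oddness of $s$ being needed here so that $s^2DN+1\equiv0\mod4$. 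Next one shows $\O$ is a ring. In Part (1) the identities $J=2e_2-e_1$, $IJ=sDN\,e_1-2sDN\,e_2+p\,e_4$ and $I=2e_3-IJ$ show that $\O$ contains the ``standard'' order $\O_0=\Z\gen{1,I,J,IJ}$; since moreover $e_3=Ie_2$, the $\Z$-module $\O$ is generated as a ring by $\O_0$ together with $e_2$ and $e_4$, so it suffices to check that $I e_2$, $J e_2$, $e_2 I$, $e_2 J$, $I e_4$, $J e_4$, $e_4 I$, $e_4 J$, $e_2^2$, $e_4^2$, $e_2 e_4$ and $e_4 e_2$ all lie in $\O$; each is a one-line quaternion computation, and the congruences on $p$ and $s$ are exactly what make the coefficients integral when one re-expands these products in the basis $e_1,\dots,e_4$. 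Part (2) is parallel, with $\O_0\subseteq\O$ now coming from $I=2e_2-e_1$ and $IJ=2p\,e_4-sDN\,e_3$.

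\textbf{The discriminant and the form $Q_\mu$.} Since $\O\supseteq\O_0$, comparing the $\Z$-bases $\{1,I,J,IJ\}$ and $\{e_1,e_2,e_3,e_4\}$ shows that $[\O:\O_0]$ equals the reciprocal of the absolute value of the change-of-basis determinant, namely $4p$; as $\disc(\O_0)=\det\mathrm{diag}(2,-2DN,2p,2DNp)=-16(DN)^2p^2$, we get $\disc(\O)=\disc(\O_0)/[\O:\O_0]^2=-(DN)^2$, i.e. $\O$ has reduced discriminant $DN$. Because $D$ and $N$ are coprime and $N$ is squarefree, an order of reduced discriminant $DN$ in $B$ is automatically an Eichler order of level $N$: the claim is local, and at a prime $q\mid N$ an order of reduced discriminant $q$ in $\mathrm{M}(2,\Q_q)$ is the intersection of its two maximal overorders. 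For the perpendicular lattice, take $\mu=I$; writing $\beta=b_0+b_1I+b_2J+b_3IJ$ gives $\tr(\beta\overline\mu)=-\tr(\beta I)=2b_1DN$, so $\mu^\perp=\{\beta\in\O:b_1=0\}$, and since the $I$-coordinate of $\sum a_ie_i$ is $a_3/2$ in Part (1) (resp. $a_2/2$ in Part (2)) we obtain $\mu^\perp=\Z e_1+\Z e_2+\Z e_4$ (resp. $\Z e_1+\Z e_3+\Z e_4$), whence $\mu^\perp/\Z$ is free on the images of $e_2,e_4$ (resp. $e_3,e_4$). Finally, expanding $\disc(\alpha x+\beta y)=\disc(\alpha)x^2+\bigl(2\tr(\alpha)\tr(\beta)-4\tr(\alpha\overline\beta)\bigr)xy+\disc(\beta)y^2$ with $(\alpha,\beta)=(e_2,e_4)$, and using $\disc(e_2)=1-4\nm(e_2)=p$, $\disc(e_4)=-4\nm(e_4)=4DNt$, $\overline{e_4}=-e_4$ and $\tr(e_2e_4)=sDN$, yields $Q_\mu(x,y)=px^2+4sDN\,xy+4tDN\,y^2$; the analogous computation with $(\alpha,\beta)=(e_3,e_4)$ in Part (2), using $\disc(e_3)=-4\nm(J)=4p$, yields $4px^2+4sDN\,xy+4tDN\,y^2$. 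Each such form is positive definite of discriminant $-16DN$, primitively represents the prime $p$ (resp. $4p$), and represents only integers $\equiv0,1\mod4$; since $p$ is prime, a form with these properties is unique up to $\GL(2,\Z)$-equivalence, so it is $\GL(2,\Z)$-equivalent to $Q$.

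\textbf{The main obstacle.} The only genuinely laborious step is checking that $\O$ is closed under multiplication: it is a purely mechanical list of about a dozen products in each case, but one must keep track of where each arithmetic hypothesis on $p$ and $s$ is used to keep the coefficients integral. Organizing the verification around the containment $\O_0\subseteq\O$ and the single extra ring generator $e_4$ (using $e_3=Ie_2$) keeps it manageable. Everything else reduces to short reduced-trace and reduced-norm computations together with a determinant count.
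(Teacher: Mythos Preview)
The paper does not give its own proof of this lemma: it is quoted verbatim from \cite[Lemma 38]{Yang-QM}, with only the remark that the case $N=1$ treated there extends to squarefree $N$ and that the order construction goes back to Ibukiyama \cite{Ibukiyama-orders}. So there is no in-paper argument to compare against; your direct-verification strategy is exactly the sort of proof one expects for a statement of this type, and the computations you outline (integrality of the $e_i$, index $[\O:\O_0]=4p$ giving reduced discriminant $DN$, the $I$-coefficient criterion for $\mu^\perp$, and the explicit $\disc$-expansion for $Q_\mu$) are all correct. Two small remarks: your phrase ``the single extra ring generator $e_4$'' is slightly misleading, since in Part~(1) both $e_2$ and $e_4$ lie outside $\O_0$ (though indeed $e_3=Ie_2$ follows once $e_2\in\O$); and the step ``an order of reduced discriminant $DN$ with $N$ squarefree is automatically Eichler of level $N$'' is correct but deserves a reference (e.g.\ hereditary orders are Eichler), as it is the only point where a structural fact about quaternion orders, rather than a bare computation, is invoked.
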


Note again that Lemma 38 of \cite{Yang-QM} considered only the case
$N=1$, but it is easy to see that it can be generalized as above.
Note also that the construction of Eichler orders of the form given in
the lemma first appeared in \cite{Ibukiyama-orders} and was used in
\cite{Hashimoto-modular-embedding}.

Let $B=\JS{-DN,p}\Q$ be the quaternion algebra of discriminant $D$
over $\Q$ and $\O$ be the Eichler order of level $N$ given in the
lemma. Choose the embedding $\phi:B\hookrightarrow \mathrm{M}(2,\R)$ to be
$$
\phi(I)=\M0{-1}{DN}0, \quad \phi(J)=\M{\sqrt p}00{-\sqrt p}.
$$
Let $\alpha_1,\ldots,\alpha_4$ be a symplectic basis with respect to
the element $\mu=I$ in the lemma, i.e.,
$$
\left(\tr(\mu^{-1}\alpha_i\overline\alpha_j)\right)_{i,j=1,\ldots,4}
=\M{0_2}{1_2}{-1_2}{0_2}.
$$
For $z\in\H$, let
$v_z=\left(\begin{smallmatrix}z\\1\end{smallmatrix}\right)$,
$\Lambda_z=\phi(\O)v_z$, and $A_z=\C^2/\Lambda_z$. Write
$$
\left(\phi(\alpha_1)v_z,\ldots,\phi(\alpha_4)v_z\right)
=(\tau_1,\tau_2),
$$
where $\tau_j\in \mathrm{M}(2,\C)$. Then the normalized period matrix for the
principally polarized abelian surface $(A_z,\rho_\mu)$ is
$\tau_z:=\tau_2^{-1}\tau_1$. The quaternion modular embedding
$$
\iota_\mu:\H\to\H_2, \quad \iota_\mu:z\mapsto\tau_z
$$
defines an explicit map from $X_0^D(N)$ to $\X_\mu$ in
$\QQ_{D,N}$. Moreover, for $\gamma\in\O$, the matrix for $\gamma$, as
an endomorphism of $A_z$, with respect to the basis
$\phi(\alpha_1)v_z,\ldots,\phi(\alpha_4)v_z$ is the matrix
$M_\gamma$ such that
$$
(\gamma\alpha_1,\ldots,\gamma\alpha_4)
=(\alpha_1,\ldots,\alpha_4)M_\gamma.
$$
From this, we obtain the following description of singular relations
satisfied by the Shimura curve associated to $Q$.

\begin{Lemma}
  \label{lemma: singular relations for X}
  We retain the notations above and those in Lemma \ref{lemma:
    explicit mu perp}.
  \begin{enumerate}
  \item Assume that $Q$ is primitive. A symplectic basis for $\O$ with
    respect to $\mu=I$ is
    $$
    \alpha_1=e_3-\frac{p-1}{2}e_4,\quad \alpha_2=-sDNe_1-e_4,\quad
    \alpha_3=e_1,\quad \alpha_4=e_2.
    $$
    The normalized period matrix of $(A_z,\rho_\mu)$ with respect to
    this symplectic basis is
    \begin{equation}\label{equation: Omega_z 1}
      \tau_z=\frac{1}{pz}\M
      {-\bar{\epsilon}^2+(p-1)sDNz/2+DN\epsilon^2z^2}
      {\bar{\epsilon}-(p-1)sDNz-DN\epsilon z^2}
      {\bar{\epsilon}-(p-1)sDNz-DN\epsilon z^2}
      {-1-2sDNz+DNz^2},
    \end{equation}
    where $\epsilon=(1+\sqrt{p})/2$ and
    $\overline{\epsilon}=(1-\sqrt{p})/2$. Moreover, the singular
    relations corresponding to $e_2$ and $e_4$ are
    \begin{equation}\label{equation: Q mu basis 1}
      \ell_1=(1,1,(1-p)/4,0,0),\quad\ell_2=(0,2sDN,0,1,DN(s^2DN-t)),
    \end{equation}
    respectively.
  \item Assume that $Q=4Q'$ for some quadratic form $Q'$ of
    discriminant $-DN$. A symplectic basis for $\O$ with respect to
    $\mu=I$ is
    $$
    \alpha_1=e_2,\quad\alpha_2=-e_4,\quad
    \alpha_3=e_1,\quad \alpha_4=e_3.
    $$
    The normalized period matrix of $(A_z,\rho_\mu)$ with respect to
    this symplectic basis is
    \begin{equation}\label{equation: Omega_z 2}
      \tau_z=\frac{1}{4z}\M
      {DNz^2+2z-1}{-DNz^2/\sqrt{p}-1/\sqrt{p}}
      {-DNz^2/\sqrt{p}-1/\sqrt{p}}{DNz^2/p-2sDNz/p-1/p}.
    \end{equation}
    Moreover, the singular relations corresponding to $e_3$ and $e_4$
    are
    \begin{equation}\label{equation: Q mu basis 2}
      \ell_1=(1,0,-p,0,-(1+sDN)/2),\quad
      \ell_2=(0,0,(1-sDN)/2,1,-tDN),
    \end{equation}
    respectively.
  \end{enumerate}
\end{Lemma}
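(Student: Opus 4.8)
The plan is to prove all three assertions of each part by explicit computation from the data in Lemma~\ref{lemma: explicit mu perp} together with the fixed embedding $\phi$; I describe Part~(1), Part~(2) being entirely analogous with the Eichler basis \eqref{equation: Eichler basis 2} in place of \eqref{equation: Eichler basis 1}.

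First I would check that $\{\alpha_1,\alpha_2,\alpha_3,\alpha_4\}$ is a $\Z$-basis of $\O$: the matrix expressing it in terms of $(e_1,e_2,e_3,e_4)$ is evidently unimodular. Then I would verify that this basis is symplectic with respect to $\mu=I$, i.e.\ that $\bigl(\tr(\mu^{-1}\alpha_i\overline{\alpha_j})\bigr)_{i,j}=\SM{0_2}{1_2}{-1_2}{0_2}$. Since $I$ is a pure quaternion with $\nm(I)=DN$ we have $\mu^{-1}=-I/DN$, so each of the sixteen entries equals $-\tr(I\alpha_i\overline{\alpha_j})/DN$ and is read off from the multiplication table of $\O$ in the basis \eqref{equation: Eichler basis 1}. (In practice this $\alpha$-basis is not guessed but produced by writing down the Gram matrix of the alternating form $\langle\alpha,\beta\rangle=\tr(\mu^{-1}\alpha\overline\beta)$ on $\O$ in the $e$-basis and running a symplectic Gram--Schmidt reduction; here one only needs to certify the stated answer.)

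Next I would compute the period matrix: from $\phi(I)=\M0{-1}{DN}0$ and $\phi(J)=\M{\sqrt p}00{-\sqrt p}$ one obtains the matrices $\phi(\alpha_k)$, hence the columns $\phi(\alpha_k)v_z$ for $k=1,\dots,4$; writing $(\phi(\alpha_1)v_z,\dots,\phi(\alpha_4)v_z)=(\tau_1,\tau_2)$ with $\tau_1,\tau_2\in\mathrm M(2,\C)$, the normalized period matrix is $\tau_z=\tau_2^{-1}\tau_1$, and carrying out the $2\times2$ inversion and multiplication yields \eqref{equation: Omega_z 1}. One should also note that $\tau_z$ is symmetric (manifest from the formula) and that $\Im\tau_z$ is positive definite for $z\in\H$; the latter is equivalent to the $\alpha$-basis being correctly oriented, and it is what fixes all the signs. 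Finally, for the singular relations I would compute, for $\gamma\in\{e_2,e_4\}$ (the generators of $\mu^\perp/\Z$ supplied by Lemma~\ref{lemma: explicit mu perp}), the integral matrix $M_\gamma$ determined by $(\gamma\alpha_1,\dots,\gamma\alpha_4)=(\alpha_1,\dots,\alpha_4)M_\gamma$ from the multiplication table; since $\gamma$ is Rosati-invariant this matrix has the symmetric shape recalled above, and its off-diagonal entries directly yield the coefficients of the associated singular relation, giving $\ell_1$ for $\gamma=e_2$ and $\ell_2$ for $\gamma=e_4$. As a consistency check one then verifies that $\ell_1$ and $\ell_2$ satisfy \eqref{equation: singular relation} with $(\tau_1,\tau_2,\tau_3)$ the entries of \eqref{equation: Omega_z 1}, and that $\Delta(\ell_1)=\disc(e_2)=p$ and $\Delta(\ell_2)=\disc(e_4)=4DNt$.

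The whole argument is mechanical, so the only genuine difficulty is bookkeeping. The one delicate point is the sign and orientation normalization in the period-matrix step: one must arrange the symplectic basis so that $\tau_z$, and not its complex conjugate, lands in $\H_2$. The most laborious single computations are that of $M_{e_4}$ and of the product $\tau_2^{-1}\tau_1$, the denominator $p$ (respectively $2p$ in Part~(2)) appearing in $e_4$ making these the messiest.
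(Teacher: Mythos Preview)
Your plan is correct and matches the paper's approach: the paper does not spell out a proof but defers to \cite{Hashimoto-modular-embedding} and \cite[Lemma~40]{Yang-QM}, where the symplectic basis and $\tau_z$ are obtained by exactly the direct computation you describe. The only minor difference is that those references verify $\ell_1,\ell_2$ by plugging the entries of $\tau_z$ into \eqref{equation: singular relation}, whereas you propose reading them off from $M_{e_2},M_{e_4}$---this is precisely the identification the paper singles out in its remark following the lemma.
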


Essentially, everything (in the case $N=1$) in the lemma is contained
in the proof of Theorem 5.1 of \cite{Hashimoto-modular-embedding} and Lemma 40 of \cite{Yang-QM}, although in the proofs therein the
singular relations in \eqref{equation: Q mu basis 1} and
\eqref{equation: Q mu basis 2} were obtained by direct computation
using the description of $\tau_z$ above. Here we point out that these
singular relations correspond to the basis for $\mu^\perp/\Z$ given in
Lemma \ref{lemma: explicit mu perp}.

Note that the explicit description of $\ell_1$ and $\ell_2$ in
\eqref{equation: Q mu basis 1} and \eqref{equation: Q mu basis 2}
shows that for $m_1,m_2\in\Z$, the singular relation
$m_1\ell_1+m_2\ell_2$ is primitive if and only if $(m_1,m_2)=1$.
Consequently, we have the following result.

\begin{Corollary} A Shimura curve $\X$ in $\QQ_{D,N}$ is contained in
  the Humbert surface of discriminant $n$ if and only if its
  associated quadratic form primitively represents $n$.
\end{Corollary}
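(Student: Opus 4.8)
The plan is to deduce the corollary from Humbert's description of $H_n$, the explicit generators $\ell_1,\ell_2$ of $\mu^\perp/\Z$ given in Lemma~\ref{lemma: singular relations for X}, and the indivisibility criterion recorded just above the corollary. Writing $\X=\X_\mu$, I would first record the two inputs I need: (i) by Humbert's theorem, a point of $\A_2$ lies on $H_n$ precisely when one of its lifts $\tau\in\H_2$ satisfies a primitive singular relation of discriminant $n$ (this is legitimate because $\Sp(4,\Z)$ permutes primitive singular relations of a fixed discriminant, all of which cut out $H_n$); and (ii) the singular relations satisfied by $\tau$ correspond to the symmetric endomorphisms of the associated abelian surface modulo $\Z$, primitive relations matching indivisible classes, and on $\mu^\perp/\Z$ the discriminant form $\Delta$ restricts to $\alpha+\Z\mapsto\disc(\alpha)$, i.e.\ to $Q_\mu$ in the basis $\{\ell_1,\ell_2\}$.

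Next I would treat the ``if'' direction. Suppose $Q$ primitively represents $n$; since $\GL(2,\Z)$-equivalence preserves the set of primitively represented integers and $Q$ is $\GL(2,\Z)$-equivalent to $Q_\mu$, there are $m_1,m_2$ with $\gcd(m_1,m_2)=1$ and $Q_\mu(m_1,m_2)=n$. From the explicit expressions in \eqref{equation: Q mu basis 1} and \eqref{equation: Q mu basis 2}, $\ell:=m_1\ell_1+m_2\ell_2$ is primitive (by the observation preceding the corollary) and $\Delta(\ell)=Q_\mu(m_1,m_2)=n$. Since $\ell_1,\ell_2$ lie in $\mu^\perp/\Z$ they are satisfied by every point of $\X_\mu$, hence so is $\ell$, so $\X$ maps into the zero locus in $\A_2$ of the primitive disc-$n$ relation $\ell$, which is $H_n$.

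For the converse I would assume $\X\subseteq H_n$ and pick $z\in X_0^D(N)$ outside the countable set of CM-points, so that $\End(A_z)=\O$. Then the Rosati-symmetric endomorphisms of $A_z$ are exactly $\mu^\perp$, so the lattice of singular relations satisfied by $\tau_z$ equals $\mu^\perp/\Z=\Z\ell_1\oplus\Z\ell_2$. Because $\bar\tau_z\in H_n$, the point $\tau_z$ satisfies some primitive singular relation $\ell$ of discriminant $n$; writing $\ell=m_1\ell_1+m_2\ell_2$, the observation preceding the corollary forces $\gcd(m_1,m_2)=1$, whence $n=\Delta(\ell)=Q_\mu(m_1,m_2)$ is a primitive value of $Q_\mu$, hence of $Q$.

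The step I expect to need the most care is the genericity input in the converse: I must justify that away from the CM-points the endomorphism ring is no larger than $\O$, so that the singular-relation lattice does not grow beyond $\mu^\perp/\Z$ and the forced disc-$n$ relation genuinely lies in $\Z\ell_1\oplus\Z\ell_2$. The remaining ingredients — $\GL(2,\Z)$-invariance of primitive representability, the identification of $\Delta|_{\mu^\perp/\Z}$ with $Q_\mu$, and Humbert's reduction of ``lying on $H_n$'' to ``satisfying a primitive disc-$n$ relation up to $\Sp(4,\Z)$'' — are routine from Section~2.
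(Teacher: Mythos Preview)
Your proposal is correct and follows essentially the same approach as the paper: the paper states the corollary as an immediate consequence of the observation that $m_1\ell_1+m_2\ell_2$ is primitive if and only if $(m_1,m_2)=1$, together with the remark (placed just after the corollary) that for a non-CM point $z$ the lattice of singular relations of $\tau_z$ is exactly $\Z\ell_1\oplus\Z\ell_2$. Your write-up simply makes these implicit steps explicit, including the identification $\Delta|_{\Z\ell_1\oplus\Z\ell_2}=Q_\mu$ and the invocation of Humbert's theorem, and correctly flags the genericity input (non-CM $\Rightarrow$ $\End(A_z)=\O$) as the one point needing justification.
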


Note that if $z$ is not a CM point, then the lattice of singular
relation satisfied by $\tau_z$ has rank $2$ and is spanned by $\ell_1$
and $\ell_2$ in the lemma. In the next section, we
will consider the case of CM-points.

\subsection{Singular relations satisfied by CM-points}

We retain all the notations in the previous section. In this section,
we shall describe singular relations satisfied by CM-points on a
Shimura curve $\X$ associated to a quadratic form $Q$ of discriminant
$-16DN$.

Recall that a CM-point $z$ on $X_0^D(N)$ is defined to be the common
fixed point of $\phi(\psi(K))$ in the upper half-plane for some
embedding $\psi$ from an imaginary quadratic field $K$ into $B$. Its
discriminant is defined to be the discriminant of the quadratic order $R$ in
$K$ such that $\psi(K)\cap\O=\psi(R)$. To describe singular relations
satisfied by the corresponding point $\tau_z$ on $\X$, we let
\begin{equation}\label{equation: basis of trace zero}
  \beta_1=2e_2-e_1=J,\quad\beta_2=e_3=\frac{I+IJ}2,\quad
  \beta_3=e_4=\frac{sDNJ+IJ}p
\end{equation}
in the case $Q$ is primitive, and
\begin{equation}\label{equation: basis for trace zero 2}
  \beta_1=e_3=J, \quad \beta_2=2e_2-e_1=I, \quad
  \beta_3=e_4=\frac{sDNJ+IJ}{2p}
\end{equation}
in the case $Q=4Q'$. Then they form a basis for the $\Z$-module of elements
of trace zero in $\O$.

\begin{Lemma}[{\cite[Lemma 40]{Yang-QM}}]
  \label{lemma: discriminant of CM lattice}
  Let $z$ be a CM-point of discriminant $d$ on $X_0^D(N)$, $\psi$ be
  its corresponding optimal embedding of discriminant $d$,
  $\tau_z$ be its corresponding point on $\X$, and $\mathcal L_z$ be
  the lattice of singular relations satisfied by $\tau_z$. Write
  $$
  \psi(\sqrt d)=b_1\beta_1+b_2\beta_2+b_3\beta_3.
  $$
  \begin{enumerate}
  \item Assume that $Q$ is primitive. Let $\ell_1$ and $\ell_2$ be the
    singular relations satisfied by $\X$ given in \eqref{equation: Q
      mu basis 1}. Then in addition to $\ell_1$ and $\ell_2$, $\tau_z$
    also satisfies
    $$
    \ell_3=(0,b_2,b_3+b_2(1-p)/4,0,b_1).
    $$
    These three singular relations form a $\Z$-basis for $\mathcal
    L_z$. Moreover, the Gram matrix of $\mathcal L_z$ with respect to
    this basis is
    \begin{equation}\label{equation: CM Gram1}
    \begin{pmatrix}
    p&2sDN&-b_2p/2-b_3\\
    2sDN&4tDN&2b_1-b_2sDN\\
    -b_2p/2-b_3&2b_1-b_2sDN&b^2_2/4
    \end{pmatrix}.
    \end{equation}
    (Note that $b_2$ and $b_3$ are necessarily even, while $b_1$ is
    even or odd depending on whether $d$ is even or odd.)
  \item Assume that $Q=4Q'$ for some quadratic form $Q'$ of discriminant
    $-DN$. Let $\ell_1$ and $\ell_2$ be the
    singular relations satisfied by $\X$ given in \eqref{equation: Q
      mu basis 2}. Then in addition to $\ell_1$ and $\ell_2$, $\tau_z$
    also satisfies
    $$
    \ell_3=(0,-b_2,b_3/2,0,-b_1/2).
    $$
    These three singular relations form a $\Z$-basis for $\mathcal
    L_z$. Moreover, the Gram matrix of $\mathcal L_z$ with respect to
    this basis is
     \begin{equation}\label{equation: CM Gram2}
       \begin{pmatrix}
       4p&2sDN&-b_3\\
       2sDN&4tDN&b_1\\
       -b_3&b_1&b_2^2
       \end{pmatrix}.
     \end{equation}
     (Note that $b_1$ and $b_3$ are necessarily even, while $b_2$ is
     even or odd depending on whether $d$ is even or odd.)
  \end{enumerate}
  In both cases, the discriminant of $\mathcal L_z$ is $4|d|$.
\end{Lemma}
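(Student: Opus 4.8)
The plan is to prove Lemma \ref{lemma: discriminant of CM lattice} by combining two inputs: (i) the explicit description, from the previous section, of the rank-$2$ lattice of singular relations spanned by $\ell_1,\ell_2$ that is satisfied by \emph{every} point on $\X$, and (ii) the fact that $\psi(\sqrt d)\in\mu^\perp$, which produces one extra singular relation $\ell_3$, and that $\ell_1,\ell_2,\ell_3$ together exhaust $\mathcal L_z$ when $z$ is a CM point. First I would recall that, since $z$ is a CM point of discriminant $d$ with optimal embedding $\psi$, the element $\psi(\sqrt d)$ has trace zero and lies in $\mu^\perp=\{\beta\in\O:\tr(\beta\bar\mu)=0\}$ (because $\mu=I$ anticommutes with the image of $K$, as $K$ is a field distinct from $\Q(\mu)$ in $B$). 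Writing $\psi(\sqrt d)=b_1\beta_1+b_2\beta_2+b_3\beta_3$ in the trace-zero basis \eqref{equation: basis of trace zero} (resp.\ \eqref{equation: basis for trace zero 2}), one translates the endomorphism $\psi(\sqrt d)$ of $A_z$ into a singular relation via the dictionary \eqref{matrix M}: the integers $(a,b,c,d,e)$ of the relation are read off from the matrix $M_{\psi(\sqrt d)}$ representing multiplication by $\psi(\sqrt d)$ with respect to the symplectic basis $\alpha_1,\dots,\alpha_4$ of Lemma \ref{lemma: singular relations for X}. This is exactly the computation that produces $\ell_3=(0,b_2,b_3+b_2(1-p)/4,0,b_1)$ in case (1) and $\ell_3=(0,-b_2,b_3/2,0,-b_1/2)$ in case (2); I would present it as a change-of-basis bookkeeping from the $\beta_i$ expressions for $e_2,e_4$ (resp.\ $e_3,e_4$) to the $\alpha_i$.

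Next I would check that $\{\ell_1,\ell_2,\ell_3\}$ is actually a $\Z$-basis of $\mathcal L_z$, not merely a spanning set of a finite-index sublattice. Here I would invoke the known rank: $\mathcal L_z$ has rank $3$ because $\tau_z$ is a CM point on a Shimura curve (the Picard number of the corresponding abelian surface with quaternionic multiplication and CM is maximal), and the three $\ell_i$ are visibly $\Q$-linearly independent from their shapes. To see the sublattice they span is saturated, I would argue that any singular relation $\ell$ satisfied by $\tau_z$ corresponds to a symmetric endomorphism $f$ of $A_z$, that $\End(A_z)$ for such a CM point is an order containing $\O$ together with $\psi(R)$ (where $R$ is the CM order of discriminant $d$), and that the symmetric part of this order is precisely the $\Z$-span of $1$, a $\Z$-basis of $\mu^\perp\cap\O$, and $\psi(\sqrt d)$ — which under the dictionary \eqref{matrix M} is exactly $\gen{\ell_1,\ell_2,\ell_3}$ after quotienting by $\Z$. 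Equivalently (and this is the route I would actually take to keep it self-contained), Lemma \ref{lemma: explicit mu perp} already tells us $\mu^\perp/\Z=\gen{e_2,e_4}$ (resp.\ $\gen{e_3,e_4}$) and one checks that $\mu^\perp$ together with $\psi(R)$ generates the full ring of symmetric relations; the primitivity remark following Lemma \ref{lemma: singular relations for X} handles the $\ell_1,\ell_2$ part, and for $\ell_3$ one uses that $\psi$ is \emph{optimal}, i.e.\ $\psi(K)\cap\O=\psi(R)$, so no proper fraction of $\psi(\sqrt d)$ lies in $\O$.

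Then I would compute the Gram matrices. The diagonal entries are $\Delta(\ell_i)=\operatorname{disc}(e_2)=p$, $\operatorname{disc}(e_4)=4tDN$ (resp.\ $4p$, $4tDN$), and $\operatorname{disc}(\psi(\sqrt d))=4d$ rewritten in terms of $b_1,b_2,b_3$, while the off-diagonal entries come from the bilinear form \eqref{equation: inner P}, which for singular relations equals the negative of the reduced-trace pairing on the corresponding trace-zero quaternions (up to the normalization built into $\operatorname{disc}$); concretely $\gen{\ell_i,\ell_j}_\Delta=\operatorname{disc}$-polarization $=\tr(\beta\bar\beta')+\cdots$, which I would just expand using $e_2,e_4$ (resp.\ $e_3,e_4$) and the $\beta_k$ multiplication table from \eqref{equation: Eichler basis 1}–\eqref{equation: Eichler basis 2}. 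This yields matrices \eqref{equation: CM Gram1} and \eqref{equation: CM Gram2}. Finally, the discriminant of $\mathcal L_z$ is the determinant of the Gram matrix; rather than expanding the $3\times3$ determinant directly, I would note that $\mathcal L_z/\Z$-part relates to the symmetric sublattice of $\O+\psi(R)$ whose discriminant is computed from $\operatorname{disc}(R)=d$ and the known discriminant of the quaternionic piece, giving $4|d|$ — and then cross-check against the explicit entries for one small case. The parity remarks ($b_2,b_3$ even, $b_1\equiv d\bmod 2$, etc.) follow from $\psi(\sqrt d)\in\O$ forcing the coefficients against the half-integral denominators in \eqref{equation: Eichler basis 1}–\eqref{equation: Eichler basis 2}. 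The main obstacle I anticipate is the saturation step — verifying that $\ell_1,\ell_2,\ell_3$ generate $\mathcal L_z$ over $\Z$ and not just over $\Q$ — since this is where one genuinely needs optimality of the embedding and the precise structure of $\O$, rather than a routine matrix manipulation; everything else is careful but mechanical bookkeeping with the dictionary \eqref{matrix M} and the bases above.
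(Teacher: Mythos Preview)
Your proposal contains a genuine conceptual error at the very first step. You assert that $\psi(\sqrt d)\in\mu^\perp$ because ``$\mu=I$ anticommutes with the image of $K$, as $K$ is a field distinct from $\Q(\mu)$ in $B$.'' This is false: two distinct imaginary quadratic subfields of a quaternion algebra need not have anticommuting generators. Concretely, in case~(2) the trace-zero basis has $\beta_2=I$, so $\psi(\sqrt d)=b_1J+b_2I+b_3\beta_3$ lies in $I^\perp$ only when $b_2=0$; in case~(1), $\beta_2=(I+IJ)/2$ likewise fails to anticommute with $I$. A decisive sanity check: the Gram matrices give $\Delta(\ell_3)=b_2^2/4$ (case~1) or $b_2^2$ (case~2), whereas if $\ell_3$ arose from $\psi(\sqrt d)$ via the dictionary \eqref{matrix M} you would get $\Delta(\ell_3)=\disc(\psi(\sqrt d))=4d<0$, contradicting positive-definiteness of $\Delta$. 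In fact every Rosati-invariant element of $\O$ already lies in $\mu^\perp$ and hence contributes only to $\gen{\ell_1,\ell_2}$; the extra relation $\ell_3$ necessarily comes from a symmetric endomorphism in $\End(A_z)\setminus\O$ (recall $\End^0(A_z)\cong M_2(K)$ at a CM point), and pinning it down through that route is not the bookkeeping exercise you describe.

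The paper avoids this entirely by direct computation. One writes $\tau_z$ explicitly from \eqref{equation: Omega_z 2}, expresses the column $(\tau_1,\tau_2,\tau_3,\tau_2^2-\tau_1\tau_3,1)^t$ as a $5\times 3$ matrix times $(-1/z,\,1,\,DNz)^t$, and then uses that $z$ is the fixed point of $\phi(\psi(\sqrt d))$ to rewrite $(-1/z,\,1,\,DNz)^t$ as a $3\times 2$ matrix times $(\sqrt d,\,1)^t$. Singular relations are precisely integer vectors in the left nullspace of the resulting $5\times 2$ matrix, and one verifies by substitution that $\ell_3$ lies there. For saturation, optimality of $\psi$ gives $\gcd(b_1,b_2,b_3)\in\{1,2\}$ (according to the parity of $d$), and from the explicit coordinates of $\ell_1,\ell_2,\ell_3$ one checks that any rational combination lying in $\Z^5$ forces integer coefficients. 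Your intuition that optimality drives saturation is right, but it must be cashed out against the explicit $\ell_i$ rather than through the structural claim about symmetric parts of orders, which as written rests on the same false anticommutation.
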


\begin{proof}
  The case where $Q$ is primitive was proved (in the case
  $N=1$) in Lemma 40 of \cite{Yang-QM}. Here we sketch a proof for the
  case $Q=4Q'$.

  Recall from Lemma \ref{lemma: singular relations for X} that
  $\tau_z$ is given by
  $$
  \tau_z=\frac{1}{4z}\M
      {DNz^2+2z-1}{-DNz^2/\sqrt{p}-1/\sqrt{p}}
      {-DNz^2/\sqrt{p}-1/\sqrt{p}}{DNz^2/p-2sDNz/p-1/p}.
  $$
  Write $\tau_z$ as $\SM{\tau_1}{\tau_2}{\tau_2}{\tau_3}$. We have
  $$
  \begin{pmatrix}\tau_1\\\tau_2\\\tau_3\\\tau_2^2-\tau_1\tau_3\\1
  \end{pmatrix}
  =\frac14\begin{pmatrix}
    1&2&1\\1/\sqrt p&0&-1/\sqrt p\\
    1/p&-2sDN/p&1/p\\
    (sDN-1)/2p&DN(1+s)/p&(sDN-1)/2p\\0&4&0\end{pmatrix}
  \begin{pmatrix}-1/z\\1\\DNz\end{pmatrix}.
  $$
  Write $\psi(\sqrt d)$ as $\psi(\sqrt d)=c_1I+c_2J+c_3IJ$. Changing
  $\psi(\sqrt d)$ to $-\psi(\sqrt d)$ if necessary, we may assume that
  $c_1+c_3\sqrt p>0$. Since $z$ is fixed by $\psi(\sqrt{d})$, we have
  $$
  z=\frac{c_2\sqrt p+\sqrt d}{DN(c_1+c_3\sqrt p)},\qquad
  -\frac1z=\frac{-c_2\sqrt p+\sqrt d}{c_1-c_3\sqrt p},
  $$
  and
  $$
  \begin{pmatrix}-1/z\\1\\DNz\end{pmatrix}
  =\begin{pmatrix}\gamma&\delta\\0&1\\\overline\gamma&\overline\delta
  \end{pmatrix}\begin{pmatrix}\sqrt d\\1\end{pmatrix},
  \qquad
  \gamma=\frac1{c_1-c_3\sqrt p}, \qquad
  \delta=\frac{-c_2\sqrt p}{c_1-c_3\sqrt p},
  $$
  where $\overline\gamma$ and $\overline\delta$ are Galois conjugates
  of $\gamma$ and $\delta$, respectively. Hence, $(a_1,\ldots,a_5)$ is
  a singular relation for $\tau_z$ if and only if $(a_1,\ldots,a_5)$
  is in the nullspace of
  $$
  \begin{pmatrix}
    1&2&1\\1/\sqrt p&0&-1/\sqrt p\\
    1/p&-2sDN/p&1/p\\
    (sDN-1)/2p&DN(1+s)/p&(sDN-1)/2p\\0&4&0\end{pmatrix}
  \begin{pmatrix}\gamma&\delta\\0&1\\\overline\gamma&\overline\delta
  \end{pmatrix}.
  $$
  Using the relations
  $$
  c_1=b_2, \qquad c_2=b_1+\frac{b_3sDN}{2p}, \qquad
  c_3=\frac{b_3}{2p},
  $$
  we check directly that $\ell_3=(0,-b_2,b_3/2,0,-b_1/2)$ is a
  singular relation satisfied by $\tau_z$.
  
   Finally, we observe that
  the assumption that $\psi$ is an optimal embedding of discriminant
  $d$ implies that
  $$
  \gcd(b_1,b_2,b_3)=\begin{cases}
    1, &\text{if }d\text{ is odd},\\
    2, &\text{if }d\text{ is even}, \end{cases}
  $$
  which implies that if $\ell$ is a singular relation satisfied by
  $\tau_z$ and $r_1,r_2,r_3$ are the coefficients in
  $\ell=r_1\ell_1+r_2\ell_2+r_3\ell_3$, then $r_1,r_2,r_3$ are all
  integers. We conclude that
  $\ell_1$, $\ell_2$, $\ell_3$ form a $\Z$-basis for $\mathcal L_z$.
\end{proof}

\subsection{Galois orbits of CM-points}

Observe that if $\psi:K\hookrightarrow B$ is an embedding from an
imaginary quadratic field $K$ into $B$, then so is
$\overline\psi:a\mapsto\overline{\psi(a)}$, where $\overline{\psi(a)}$
denotes the quaternionic conjugate of $\psi(a)$, and $\phi(\psi(K))$
and $\phi(\overline\psi(K))$ have a common fixed point $z_\psi$ in the
upper half-plane. To remove the ambiguity when
we talk about the embedding corresponding to $z_\psi$, we say
an embedding $\psi$ is normalized (with respect to $\phi$) if
$$
\phi(\psi(a))\begin{pmatrix}
z_\psi\\ 1
\end{pmatrix}=a\begin{pmatrix} z_\psi\\1\end{pmatrix}
$$
for all $a\in K$ (as opposed to $\phi(\psi(a))
\left(\begin{smallmatrix}z_\psi\\1\end{smallmatrix}\right)
=\overline a\left(\begin{smallmatrix}z_\psi\\1\end{smallmatrix}
  \right)$, where $\overline a$ is the complex conjugate of $a$).
It is clear that $\psi$ is normalized with respect to $\phi$ if and
only if the $(2,1)$-entry of $\phi(\psi(a))$ has the same sign as the
imaginary part of $a$ for any $a\in K$. From now on, whenever we
mention the embedding corresponding to a CM-point, we refer to the one
that is normalized.

Denote by $\CM(d)$ the set of CM-points of
discriminant $d$ on $X_0^D(N)$. By local consideration, we have
\begin{equation}\label{number of CM-points of discriminant d}
\abs{\CM(d)}=h(d)\prod_{q\text{ primes}}m(\O,d,q),
\end{equation}
where $h(d)$ is the class number of $R$ and $m(\O,d,q)$ is the number
of local optimal embeddings of $R$ into $\O\otimes\Z_q$. The formula
for $m(\O,d,q)$ is given by
\begin{equation}\label{equation: number of local optimal embeddings}
m(\O,d,q)=\begin{cases}1,\ &\text{if }q\nmid DN,\\
1-\left(\frac{d}{q}\right),&\text{if }q\vert D\text{ and }\nmid f,\\
0,&\text{if }q\vert D\ \text{ and }q\vert f,\\
1+\left(\frac{d}{q}\right),&\text{if }q\vert N\text{ and }q\nmid f,\\
2,&\text{if }q\vert N\text{ and }p\vert f,\end{cases}
\end{equation}
assuming $N$ is squarefree, where $f$ is the positive integer such
that $d=f^2d_0$ for a fundamental discriminant $d_0$.

Let $L$ be the ring class field of $R$. Then the class group of $R$
acts on $\CM(d)$ and two CM-points of discriminant $d$ are in the same
orbit under this action if and only if their corresponding
(normalized) optimal embeddings are locally equivalent at every
place. In view of the Shimura reciprocity law, this amounts to the
property that two CM-points of discriminant $d$ are in the same
$\Gal(L/K)$-orbit if and only if their corresponding optimal
embeddings are locally equivalent at every place. This leads to the
following explicit criterion when two CM-points are
$\Gal(L/K)$-conjugates.

\begin{Lemma} \label{lemma: Galois orbits}
  Let $z$ and $z'$ be two CM-points on $X_0^D(N)$ and
  $\psi$ and $\psi'$ be their respective normalized optimal embeddings
  into $\O$. Write $\psi(\sqrt d)=b_1\beta_1+b_2\beta_2+b_3\beta_3$
  and $\psi'(\sqrt d)=b_1'\beta_1+b_2'\beta_2+b_3'\beta_3$. Write
  $d=f^2d_0$ for a fundamental discriminant $d_0$. We have the
  following properties.
  \begin{enumerate}
  \item
    We have
    $$
    b_1^2p\equiv d\mod\begin{cases}
    4DN &\text{in the case }Q\text{ is primitive},\\
    DN  &\text{in the case }Q=4Q'.\end{cases}
    $$
  \item Let $q$ be a prime divisor of $DN$. If $\psi$ and $\psi'$ are
    equivalent locally at $q$, then
    $$
    b_1\equiv b_1'\mod\begin{cases}
      4, &\text{if }q=2, \\
      q, &\text{if }q\neq 2.\end{cases}
    $$
    Moreover, if $q$ does not divide $(f,N)$, then this is necessary
    and sufficient condition for $\psi$ and $\psi'$ to be equivalent
    at $q$.
  \item If $z$ and $z'$ lie in the same $\Gal(L/K)$-orbit, then
  $$
  b_1\equiv b_1'\mod \begin{cases}
    2DN &\text{in the case }Q\text{ is primitive}, \\
    DN &\text{in the case }Q=4Q'.\end{cases}
  $$
  Moreover, if $(N,f)=1$, then this is a necessary and sufficient
  condition for $z$ and $z'$ to be in the same $\Gal(L/K)$-orbit.
  In such a case, the set of $\Gal(L/K)$-orbits of CM-points of
  discriminant $d$ on $X_0^D(N)$ are in one-to-one correspondence with
  the set
  $$
  \{r\mod 2DN:~pr^2\equiv d\mod 4DN\}
  $$
  in the case $Q$ is primitive, and with the set
  $$
  \{r\mod DN:~pr^2\equiv d\mod DN\}
  $$
  in the case $Q=4Q'$.
  \end{enumerate}
\end{Lemma}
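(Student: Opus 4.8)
The plan is to prove Lemma \ref{lemma: Galois orbits} by first recording the congruences that the coefficient $b_1$ must satisfy, then translating the local equivalence of optimal embeddings at primes dividing $DN$ into the asserted congruences on $b_1$, and finally assembling these local statements into the global $\Gal(L/K)$-orbit criterion via the Shimura reciprocity law as already discussed in the text preceding the lemma. Throughout I would work with the explicit Gram matrices \eqref{equation: CM Gram1} and \eqref{equation: CM Gram2} from Lemma \ref{lemma: discriminant of CM lattice} and the explicit bases \eqref{equation: basis of trace zero}, \eqref{equation: basis for trace zero 2} of trace-zero elements.

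\textbf{Step 1 (Part (1)).} Since $\psi(\sqrt d)=b_1\beta_1+b_2\beta_2+b_3\beta_3$ is a trace-zero element of $\O$ with $\psi(\sqrt d)^2=d$, I would compute its reduced norm from the Gram data. In the primitive case $\beta_1=J$, $\beta_2=(I+IJ)/2$, $\beta_3=(sDNJ+IJ)/p$, so $\nm(\psi(\sqrt d))=-d$ is a quadratic form in $b_1,b_2,b_3$ whose reduction modulo $4DN$ is governed by the entries of \eqref{equation: CM Gram1}; reducing mod $4DN$ kills the $b_2,b_3$ contributions up to the leading term and leaves $b_1^2 p\equiv d$. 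The case $Q=4Q'$ is identical with \eqref{equation: CM Gram2} and modulus $DN$, using that $b_1,b_3$ are even there. (This is really a direct but short computation, so I would not write it out in full.)

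\textbf{Step 2 (Part (2)).} The key input is the classification of local optimal embeddings of the quadratic order $R$ of discriminant $d$ into $\O_q=\O\otimes\Z_q$ for $q\mid DN$, together with the count $m(\O,d,q)$ in \eqref{equation: number of local optimal embeddings}. At a prime $q\mid D$, $B_q$ is the division quaternion algebra over $\Q_q$ and $\O_q$ its maximal order; when $q\nmid f$ there is a single conjugacy class of optimal embeddings, so local equivalence is automatic and on the level of $b_1$ the congruence $b_1\equiv b_1'\bmod q$ (or $\bmod 4$ if $q=2$) holds trivially by Part (1) because $pr^2\equiv d\bmod q$ has at most one solution $r\bmod q$ up to sign and the normalization pins down the sign. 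At a prime $q\mid N$, $\O_q$ is an Eichler order of level $q$ in $M(2,\Q_q)$; the two (when $q\nmid f$) conjugacy classes of optimal embeddings are distinguished by which of the two vertices of the Bruhat--Tits tree the embedding fixes, and I would show that this invariant is detected precisely by $b_1\bmod q$ using the explicit form of $\beta_1,\beta_2,\beta_3$ and the embedding $\phi$ fixed after Lemma \ref{lemma: explicit mu perp}. The ``moreover'' clause — that $b_1\bmod q$ (resp.\ $b_1\bmod 4$) is necessary \emph{and sufficient} — follows once one checks that distinct values of $b_1$ modulo $q$ compatible with Part (1) index distinct local classes, which at $q\nmid(f,N)$ exhausts all of them.

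\textbf{Step 3 (Part (3)).} Combining Part (2) over all $q\mid DN$ and using CRT, two normalized optimal embeddings are locally equivalent everywhere iff $b_1\equiv b_1'$ modulo $2DN$ (primitive case) or $DN$ (the $4Q'$ case); the doubling of the $2$-part in the primitive case comes from the $q=2$ clause together with $D_0$ possibly being even, while in the $4Q'$ case $b_1$ is already even so no extra factor appears. By the Shimura reciprocity law, recalled just above the lemma, local equivalence everywhere is equivalent to lying in the same $\Gal(L/K)$-orbit, giving the stated necessary condition; when $(N,f)=1$ the ``sufficient'' half of Part (2) upgrades this to an equivalence. Finally, the bijection between $\Gal(L/K)$-orbits of $\CM(d)$ and $\{r\bmod 2DN: pr^2\equiv d\bmod 4DN\}$ (resp.\ $\{r\bmod DN:pr^2\equiv d\bmod DN\}$) is obtained by sending an orbit to the class of $b_1$; injectivity is the criterion just proved, and surjectivity follows by counting — the size of the $r$-set matches $\prod_{q\mid DN}m(\O,d,q)$ divided by $h(d)$ appropriately, i.e.\ the number of orbits given by \eqref{number of CM-points of discriminant d}.

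\textbf{The main obstacle} I expect is Step 2 at primes $q\mid N$: one has to identify, concretely and uniformly in $q$, the tree-vertex invariant of a local optimal embedding with a congruence class of $b_1$, and to do so using the \emph{fixed} global Eichler bases \eqref{equation: Eichler basis 1}, \eqref{equation: Eichler basis 2} rather than a local normal form. The bookkeeping of the power of $2$ (the jump from $DN$ to $2DN$ in the primitive case) is the delicate point, since it depends on the parity of $d$, whether $2\mid DN$, and the congruence $b_1^2p\equiv d$; I would handle it by separating the cases $2\mid D$, $2\mid N$, and $2\nmid DN$ and checking each against Part (1). The remaining steps are essentially formal given the results already established in the excerpt.
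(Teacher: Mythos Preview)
Your overall architecture (norm computation for Part (1), local analysis at each $q\mid DN$ for Part (2), then CRT plus Shimura reciprocity for Part (3)) is exactly the paper's strategy, and Steps 1 and 3 are fine. The problem is in Step 2 at primes $q\mid D$: you claim that ``when $q\nmid f$ there is a single conjugacy class of optimal embeddings,'' but \eqref{equation: number of local optimal embeddings} gives $m(\O,d,q)=1-\left(\frac{d}{q}\right)$, which is $0$ or $2$ (and equals $1$ only in the ramified case $q\mid d_0$). All embeddings of $R$ into the maximal order of a local division algebra are $B_q^\times$-conjugate, but the relevant equivalence here is by $\O_q^\times$, and the quotient $B_q^\times/\Q_q^\times\O_q^\times$ has order $2$. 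So local equivalence at $q\mid D$ is \emph{not} automatic, and your appeal to ``normalization pins down the sign'' does not help: the normalization of $\psi$ is an archimedean condition and carries no information at a finite prime $q$. You therefore still have to show that the two local classes at $q\mid D$ are separated by $b_1\bmod q$, just as at $q\mid N$.

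The paper avoids your $D$-versus-$N$ dichotomy entirely. In the case $Q=4Q'$ (the primitive case being deferred to \cite{Yang-QM}) every $q\mid DN$ is odd, so $1,I,J,IJ$ is a $\Z_q$-basis of $\O_q$; one simply computes, for $\gamma=d_1+d_2I+d_3J+d_4IJ\in\O_q^\times$ and a trace-zero element $c_1I+c_2J+c_3IJ$, that the $J$-coefficient changes under conjugation by $\gamma$ by a multiple of $DN/\nm(\gamma)$, hence of $q$. Since $\beta_1=J$, this gives $b_1\equiv b_1'\bmod q$ directly, uniformly in whether $q\mid D$ or $q\mid N$. The sufficiency when $q\nmid(f,N)$ then follows by pure counting: the number of residues $r\bmod q$ with $pr^2\equiv d$ is checked to equal $m(\O,d,q)$, so the (a priori surjective) map from local classes to residues is a bijection. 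This uniform conjugation computation is both shorter and sidesteps the Bruhat--Tits bookkeeping you anticipated as the main obstacle.
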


\begin{proof} The case of primitive $Q$ was proved in details in Lemma
  46 of \cite{Yang-QM}. Here we sketch the proof of the case $Q=4Q'$.

  We have
  $$
  d=-\nm(\psi(\sqrt{d}))=b_1^2p+DN(
  b_1b_3s-b_2^2+b_3^2t),\qquad t=\frac{s^2DN+1}{4p}.
  $$
  Therefore, we have $b_1^2p\equiv d\mod DN$.

  Now let $q$ be a prime divisor of $DN$. Since $q$ is odd and
  different from $p$, $1$, $I$, $J$, and $IJ$ form a $\Z_q$-basis for
  $\O_q:=\O\otimes_\Z\Z_q$. A direct computation shows that if
  $\gamma=d_1+d_2I+d_3J+d_4IJ\in\O_q$ and
  $c'I+c_2'J+c_3'IJ=\gamma(c_1I+c_2J+c_3IJ)\gamma^{-1}$, then
  $$
  c_2'-c_2=\frac{2DN(c_1d_0d_3-c_3d_0d_1+c_1d_1d_2-c_2d_1^2
    -c_3d_2d_3p+c_2d_3^2p)}{\nm(\gamma)}.
  $$
  Therefore, if $\gamma\in\O_q^\times$, then $c_2'\equiv c_2\mod q$,
  which implies that if $\psi$ and $\psi'$ are locally equivalent at
  $q$, then $b_1'\equiv b_1\mod q$. In the case $q\nmid(N,f)$, the
  number of solutions of the congruence equation $pr^2\equiv d\mod q$
  coincides with the number of local optimal embeddings discriminant
  $d$ at $q$. Thus, in the case $q\nmid(N,f)$, local optimal
  embeddings of discriminant $d$ at the place $q$ are in one-to-one
  correspondence with the solutions of $pr^2\equiv d\mod q$. Then the
  lemma follows.
\end{proof}

\begin{Remark}
Note that when $q|(N,f)$, the number of local optimal embeddings of
discriminant $d$ is $2$ by \eqref{equation: number of local optimal
  embeddings}, but the congrunce equation $b_1^2p\equiv d\mod q$ has
only one solution, so the condition for local equivalence of $\psi$
and $\psi'$ at $q$ given in the lemma does not work. It
is possible to work out a condition in such a case, but it is not
needed for the purpose of this paper.
\end{Remark}
 
\begin{Corollary}\label{e of singular relations}
  Let $\X$ be a Shimura curve in $\QQ_{D,N}$.
  Let $d$ be a negative discriminant such that CM-points of
 discriminant $d$ exist on the Shimura curve $\X$. Let
 $K=\Q(\sqrt{d})$, $R$ be the quadratic order of discriminant $d$, and
 $L$ be the ring class field of $R$.
 If $z$ and $z'$ are two CM-points of discriminant $d$ lying in the
 same $\mathrm{Gal}(L/K)$-orbit, then their lattices of singular
 relations $\mathcal L_z$ and $\mathcal L_{z'}$ are isomorphic.
\end{Corollary}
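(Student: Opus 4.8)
The plan is to observe that the pair $(\mathcal L_\tau,\Delta)$ consisting of the lattice of singular relations satisfied by $\tau$ together with its quadratic form is, up to isometry, an invariant of the principally polarized abelian surface attached to $\tau$, and that the formation of this invariant commutes with the action of field automorphisms. Granting this, the corollary follows at once: by the theory of complex multiplication (the Shimura reciprocity law recalled above), if $z$ and $z'$ are CM-points of discriminant $d$ in the same $\Gal(L/K)$-orbit, then $z'=z^\sigma$ for some $\sigma\in\Gal(L/K)$; hence $\tau_{z'}=\tau_z^\sigma$, the principally polarized abelian surface attached to $\tau_{z'}$ is the $\sigma$-conjugate of the one attached to $\tau_z$, and therefore $(\mathcal L_{z'},\Delta)\cong(\mathcal L_z,\Delta)$. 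In particular $\mathcal L_z$ and $\mathcal L_{z'}$ are isomorphic as lattices.

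To justify the invariance, recall from Section 2 that a singular relation $\ell=(a,b,c,d,e)$ satisfied by $\tau$ is exactly the datum of a symmetric (Rosati-invariant) endomorphism $f\notin\Z$ of the abelian surface $A_\tau$, that under this correspondence $\mathcal L_\tau$ is identified with the group of all symmetric endomorphisms of $A_\tau$ modulo $\Z$, and that the integral matrix $M_f$ in \eqref{matrix M} satisfies $x^2-bx+(ac+de)=0$, so that $\Delta(\ell)=b^2-4(ac+de)$ is the discriminant of the characteristic polynomial of $f$. Now the Rosati involution is determined by the polarization, an algebraic datum, and the characteristic polynomial of an endomorphism is algebraically defined as well; consequently any isomorphism of principally polarized abelian surfaces --- and, more generally, the conjugation by any automorphism $\sigma$ of $\overline{\Q}$ applied to a principally polarized abelian variety defined over a number field --- carries symmetric endomorphisms to symmetric endomorphisms and preserves characteristic polynomials, hence induces an isometry of the associated pairs $(\mathcal L,\Delta)$. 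This is the invariance used above.

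Alternatively, one can argue by hand from the explicit data already assembled. Writing $\psi(\sqrt d)=b_1\beta_1+b_2\beta_2+b_3\beta_3$ and $\psi'(\sqrt d)=b_1'\beta_1+b_2'\beta_2+b_3'\beta_3$, Lemma \ref{lemma: discriminant of CM lattice} gives the Gram matrices \eqref{equation: CM Gram1} and \eqref{equation: CM Gram2}, while Lemma \ref{lemma: Galois orbits} gives $b_1\equiv b_1'\mod 2DN$ in the primitive case and $b_1\equiv b_1'\mod DN$ in the case $Q=4Q'$; combined with the identities $\nm(b_1\beta_1+b_2\beta_2+b_3\beta_3)=|d|=\nm(b_1'\beta_1+b_2'\beta_2+b_3'\beta_3)$ and $\gcd(b_1,b_2,b_3)=\gcd(b_1',b_2',b_3')\in\{1,2\}$, these constraints should determine the Gram matrix up to $\GL(3,\Z)$-equivalence, and one can then try to write down an explicit element of $\GL(3,\Z)$ carrying one to the other --- say by first normalizing $b_1$ to $b_1'$ via a transformation $\ell_3\mapsto\ell_3+k\ell_1+m\ell_2$ (readjusting $\ell_1,\ell_2$) and then absorbing the residual dependence on $b_2,b_3$. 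This is elementary but notationally heavy.

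The step I expect to be the main obstacle is the invariance statement behind the structural argument: making precise that $\Delta$ is an algebraically defined invariant of symmetric endomorphisms, so that it is transported by Galois conjugation, and that CM-points in the same $\Gal(L/K)$-orbit really do yield Galois-conjugate principally polarized abelian surfaces rather than surfaces that are merely locally isomorphic everywhere. The second point is precisely the content of the Shimura reciprocity law, which has already been recalled before the statement; the first is routine once the explicit shape of $M_f$ in \eqref{matrix M} is in hand. In the explicit alternative, the corresponding difficulty is the verification that the stated congruences and the norm/optimality constraints genuinely suffice to pin down the Gram matrix up to $\GL(3,\Z)$-equivalence.
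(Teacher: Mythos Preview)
Your conceptual approach is correct and is a genuinely different route from the paper's. The paper does exactly your ``alternative'' explicit computation: it writes down the Gram matrices $A,A'$ from \eqref{equation: CM Gram2} (treating only the case $Q=4Q'$, the primitive case being in \cite{Yang-QM}) and exhibits a lower-triangular matrix
\[
U=\begin{pmatrix}1&0&0\\0&1&0\\u&v&1\end{pmatrix},\qquad
u=t(b_3-b_3')+\tfrac{s}{2}(b_1-b_1'),\quad
v=-\tfrac{s}{2}(b_3-b_3')-\tfrac{p}{DN}(b_1-b_1'),
\]
with $A'=UAU^t$; integrality of $u,v$ then follows immediately from the parity of the $b_i$ and the congruence $b_1\equiv b_1'\bmod DN$ from Lemma~\ref{lemma: Galois orbits}. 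So the explicit route is not nearly as heavy as you feared---it is a two-line check once one guesses $U$.

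Your structural argument identifies $(\mathcal L_\tau,\Delta)$ with $(\End^{\mathrm{sym}}(A_\tau)/\Z,\disc)$ and then transports it by Galois; this is more conceptual and has the advantage of working uniformly without splitting into the primitive and $4Q'$ cases. The trade-off is that it invokes a slightly stronger form of Shimura reciprocity than the paper actually states: the paper only records the criterion ``same $\Gal(L/K)$-orbit $\Leftrightarrow$ locally equivalent optimal embeddings everywhere'', whereas you need that $\Gal(L/K)$-conjugate CM-points yield genuinely Galois-conjugate principally polarized abelian surfaces. That is of course true, but it is an extra input compared to the paper's purely linear-algebraic argument, which uses only the explicit congruence on $b_1$.
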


\begin{proof}
  Let $Q$ be the quadratic form associated to $\X$. The case $Q$ is
  primitive was proved in \cite[Proposition 41]{Yang-QM} (for the case
  $N=1$). Here we provide a proof for the case $Q=4Q'$ (so that $DN$
  is assumed to be congruent to $3$ modulo $4$).
  
  Let $B$, $\O$, and $\mu$ be given as in Part (2) of Lemma
  \ref{lemma: explicit mu perp}, and $\beta_1,\beta_2,\beta_3$ be
  given by \eqref{equation: basis for trace zero 2}.
  Assume that $z$ and $z'$ are CM-points of discriminant $d$ and
  $\psi$ and $\psi'$ are their respective normalized optimal
  embeddings. Write $\psi(\sqrt d)=b_1\beta_1+b_2\beta_2+b_3\beta_3$
  and $\psi'(\sqrt d)=b'_1\beta_1+b'_2\beta_2+b'_3\beta_3$. Recall
  from \eqref{equation: CM Gram2} that the Gram matrices for $z$ and
  $z'$ are
  \begin{equation*}
  A=\begin{pmatrix}
  4p&2sDN&-b_3\\
  2sDN& 4tDN&b_1\\
  -b_3&b_1&b_2^2
  \end{pmatrix},\ \ \ A'= \begin{pmatrix}
  4p&2sDN&-b_3'\\
  2sDN& 4tDN&b_1'\\
  -b_3'&b_1'&(b_2')^2
  \end{pmatrix}.
  \end{equation*}
  It is clear that $A'=UAU^{t}$, where
  $$
  U=\begin{pmatrix}
  1&0&0\\
  0&1&0\\
  u&v&1
  \end{pmatrix}
  $$
  with
  $$
  u=t(b_3-b_3')+\frac{s}{2}(b_1-b_1'),\quad
  v=-\frac{s}{2}(b_3-b_3')-\frac{p}{DN}(b_1-b_1').
  $$
  Since $\psi(\sqrt d)/2\in\O$ or $(1+\psi(\sqrt d))/2\in\O$
  depending on whether $d$ is even or odd, the integers $b_1$ and
  $b_3$ are necessarily even, and so are $b_1'$ and $b_3'$.
  Furthermore, by Lemma \ref{lemma: Galois orbits}, one has $b_1\equiv
  b_1'\mod DN$. Consequently, the numbers $u$ and $v$ are both
  integers and $\mathcal L_z\simeq\mathcal L_{z'}$. This proves the
  assertion. 
 \end{proof}

\section{Intersection of Shumura curves with Humbert surfaces}

As in the previous sections, we assume that $D_0$ is a positive
squarefree integer. Let $Q$ be a quadratic form of discriminant
$-16D_0$ appearing in the statement of Theorem \ref{theorem: class
  number relation}, $D$ and $N$ be defined by \eqref{equation: DQ} and
\eqref{equation: NQ}, respectively, and $\X$ be the Shimura curve
corresponding to $Q$ in $\QQ_{D,N}$ as described in Theorem
\ref{theorem: correspondence}. 

In this section, we study the
intersection of the Shimura curve $\X$ with Humbert divisors
$G_n$ defined by \eqref{equation: Gn}. We let $I(\X,G_n)$ be the
intersection number of $\X$ with $G_n$, and let $I_0(\X,G_n)$ and
$I_1(\X,G_n)$ be the contributions to $I(\X,G_n)$ from the
$0$-dimensional components and the $1$-dimensional components of the
intersection, respectively. We also recall from Theorem \ref{theorem:
  correspondence} that $\X\simeq X_0^D(N)/W$, where
$$
W=W_Q:=\begin{cases}
  \gen{w_{DN}}, &\text{if }Q\text{ is not ambiguous},\\
  \gen{w_m,w_{DN}}, &\text{if }Q\text{ is ambiguous,
    representing }m\text{ or }4m,~m|DN. \end{cases}
$$

\subsection{Intersection numbers $I(\X,G_n)$ and Cohen's Eisenstein
  series of weight $5/2$ }

We first determine $I(\X,G_n)$. Let $a_n$ be defined by
\begin{align*}
\mathcal{H}(\tau)=\sum^\infty_{n=0}a_nq^n=1-10q-70q^4-48q^5-120q^8-250q^9-240q^{12}-\cdots.
\end{align*}
Note that $\cH(z)/120$ is Cohen's Eisenstein series of weight $5/2$.
We recall the following important result of
van der Geer.

\begin{theorem}[{\cite[Corollary
    8.2]{Geer-Siegel-threefold}}] \label{Geer theorem}
  The Humbert divisor $G_n$ is the zero divisor of a Siegel
  modular form of weight $-a_n/2$ .
\end{theorem}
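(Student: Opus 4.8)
The plan is to deduce the statement from the structure of $\Pic(\bar{\A}_2)$ together with a computation of intersection numbers, the point being that a scalar Siegel modular form of weight $k$ on $\Sp(4,\Z)$ is a section of $\mathcal L^{\otimes k}$ over $\bar{\A}_2$, where $\mathcal L=\det$ of the Hodge bundle and $\lambda:=c_1(\mathcal L)$, so that its divisor is an effective divisor in the class $k\lambda$ (here I use that $H^0(\bar{\A}_2,\mathcal L^{\otimes k})$ is the space of holomorphic Siegel modular forms of weight $k$, by the Koecher principle). First I would show that $\Pic(\bar{\A}_2)\otimes\Q$ is one-dimensional, spanned by $\lambda$. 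Since the Satake boundary $\bar{\A}_2\setminus\A_2$ is $\bar{\A}_1$ together with a point and so has codimension $2$, one has $\Pic(\bar{\A}_2)\otimes\Q\cong\Pic(\A_2)\otimes\Q$; and the latter has rank $1$, which one reads off from Igusa's description of the graded ring of Siegel modular forms (so that $\bar{\A}_2$ is finite over the weighted projective space $\P(4,6,10,12)$). This already yields the proportionality of the classes $G_n$; writing $G_n\equiv c_n\lambda$ with $c_n\in\Q$, it remains to prove $c_n=-a_n/2$ and to pass from numerical equivalence to an actual section.

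To pin down $c_n$ I would intersect with $\lambda^2$ and compute $c_n=(G_n\cdot\lambda^2)/(\lambda^3)$. By Hirzebruch--Mumford proportionality the denominator is an explicit rational multiple of $\zeta(-1)\zeta(-3)$. For the numerator one uses the classical fact (Humbert, van der Geer) that each Humbert surface $H_m$ with $m$ not a perfect square is, up to a birational map and a quotient by an involution, the Hilbert modular surface attached to the real quadratic order of discriminant $m$ or $m/4$ (the square-discriminant cases being handled similarly with modular curves in place of Hilbert modular surfaces), so that its $\lambda^2$-degree is, up to the same universal constant and the weight $\nu_m$, a rational multiple of $\zeta_{\Q(\sqrt m)}(-1)=\zeta(-1)L(-1,\chi_m)$. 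The sum over $k^2\mid n$ built into the definition of $G_n$ is precisely the sum over conductors that converts these $L$-values into Cohen's number $H(2,n)$; since $\sum_{n\ge0}H(2,n)q^n$ is Cohen's Eisenstein series of weight $5/2$ and $\mathcal H(z)/120$ is that series, we get $\sum_{n\ge1}(G_n\cdot\lambda^2)q^n=\mu\,(\mathcal H(z)-1)$ for a constant $\mu$, hence $c_n=(\mu/\lambda^3)a_n$ for $n\ge1$. The scalar $\mu/\lambda^3$ is then pinned down by one known case: $G_1=\tfrac12 H_1$ is the divisor on $\A_2$ of Igusa's weight-$5$ form $\chi_5$ (the product of the ten even theta constants) — the factors $\nu_1=\nu_4=\tfrac12$ are calibrated to the generic $\Z/2$-automorphisms of $H_1$ and $H_4$ so that this holds on the nose — and since $5=-a_1/2$ we conclude $c_n=-a_n/2$ for all $n$. (Alternatively one may short-circuit the explicit zeta-value computation by invoking the modularity of the generating series of special divisors à la Kudla--Millson: $\sum_{n\ge1}(G_n\cdot\lambda^2)q^n$ completes to a form in the Kohnen plus space of weight $5/2$ on $\Gamma_0(4)$, which is one-dimensional, so it is a multiple of $\mathcal H$, and one coefficient then suffices.)

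Finally I would upgrade the numerical equivalence to a genuine form. Because $\Pic(\bar{\A}_2)$ is torsion-free (again from Igusa's structure theory) and $G_n$ is an effective divisor with $G_n\equiv(-a_n/2)\lambda$, the reflexive sheaf $\mathcal O_{\bar{\A}_2}(G_n)$ is isomorphic to $\mathcal L^{\otimes(-a_n/2)}$, possibly twisted by the order-two character of $\Sp(4,\Z)$ — the twist is forced exactly when $-a_n/2$ is odd, e.g. $-a_1/2=5$, where the relevant form is the character form $\chi_5$; note $-a_n/2$ is always a non-negative integer, since $a_n$ is even and $a_n\le0$ for $n\ge1$. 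Transporting the tautological section $1$ of $\mathcal O_{\bar{\A}_2}(G_n)$ across this isomorphism produces a holomorphic Siegel modular form of weight $-a_n/2$ whose divisor is precisely $G_n$. The main obstacle in the whole argument is the middle step — explaining why it is exactly $\mathcal H$ that appears, i.e. either carrying out the evaluation of $\zeta_{\Q(\sqrt m)}(-1)$ and the $\nu_m$/conductor bookkeeping that assembles these into Cohen's $H(2,n)$, or supplying the Kudla--Millson modularity of the generating series in the $\Sp(4)\simeq O(3,2)$ setting; this is the one genuinely nontrivial ingredient, the remaining steps (codimension of the boundary, torsion-freeness of the Picard group, and the Hirzebruch--Mumford volume of $\bar{\A}_2$) being standard.
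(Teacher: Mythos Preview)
This theorem is not proved in the paper at all: it is quoted as Corollary~8.2 of van der Geer \cite{Geer-Siegel-threefold} and then used as a black box in the proof of Proposition~\ref{proposition: I}. There is therefore no ``paper's own proof'' to compare your sketch against.

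For what it is worth, your outline is very much in the spirit of van der Geer's actual argument: he too works in $\Pic(\bar{\A}_2)\otimes\Q$, which is one-dimensional and generated by $\lambda$, computes the class of each $G_n$ by intersecting with powers of $\lambda$, identifies the resulting numbers with $\zeta_K(-1)$ for real quadratic $K$ via the Hilbert-modular description of the $H_m$, and observes that the conductor sum built into $G_n$ assembles these into Cohen's $H(2,n)$; the normalization is fixed by the case of $\chi_5$. One caution on your write-up: $\bar{\A}_2$ is singular, so the step ``boundary has codimension $2$ $\Rightarrow$ $\Pic(\bar{\A}_2)\otimes\Q\cong\Pic(\A_2)\otimes\Q$'' and the torsion-freeness claim should be read at the level of Weil divisor class groups (equivalently, on a smooth toroidal model or on the stack). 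Igusa's structure theorem does give everything you need, and you correctly flag the order-$2$ character of $\Sp(4,\Z)$, but the passage from the numerical equivalence $G_n\equiv(-a_n/2)\lambda$ to an honest section requires distinguishing Cartier from Weil divisors on the singular model; this is standard but not quite as immediate as your last paragraph suggests.
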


From van der Geer's theorem, we easily obtain a formula for
$I(\X,G_n)$.

\begin{Proposition}\label{proposition: I}
  For all positive integer $n$ congruent to $0$ or $1$ modulo
  $4$, we have
  $$
  I(\X,G_n)=\frac1{|W|}a_nH_{D,N}(0),
  $$
  where $H_{D,N}(0)$ is defined by \eqref{equation: HDN(0)}.
  Consequently, one has
  $$
  \frac1{|W|}H_{D,N}(0)+\sum_{n=1}^\infty
  I(\X,G_n)e^{2\pi inz}=\frac1{|W|}H_{D,N}(0)\cH(z).
  $$
\end{Proposition}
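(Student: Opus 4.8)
The plan is to pull back a Siegel modular form cutting out $G_n$ to the Shimura curve $\X$ and compute degrees. By Theorem \ref{Geer theorem}, there is a Siegel modular form $f_n$ of weight $-a_n/2$ on $\Sp(4,\Z)$ whose zero divisor is $G_n$. First I would recall the standard fact (cited in the introduction) that the restriction of a Siegel modular form of weight $k$ to a Shimura curve $\X \simeq X_0^D(N)/W$ embedded in $\A_2$ via the quaternion modular embedding $\iota_\mu$ is a modular form of weight $2k$ on the corresponding quotient of $X_0^D(N)$. Applying this to $f_n$, the pullback $\iota_\mu^* f_n$ is a modular form of weight $-a_n$ on $X_0^D(N)/W$, provided it is not identically zero; the non-vanishing holds because $\X$ is not contained in $G_n$ for a generic $n$—and in the degenerate case where some component $H_{n/k^2}$ contains $\X$, one argues the identity by continuity/density in $n$ or treats it separately, which I would flag as a minor point to check.

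Next I would compute the intersection number $I(\X,G_n)$ as the degree of the divisor of $\iota_\mu^* f_n$ on $X_0^D(N)/W$. The degree of the divisor of a nonzero modular form of weight $w$ on a curve $Y$ of the form $X_0^D(N)/W$ equals $w/(4\pi)$ times the hyperbolic volume of $Y$, by the valence formula / Gauss–Bonnet (with the usual normalization where the volume of a fundamental domain enters). Concretely, $\deg(\div(\iota_\mu^* f_n)) = \dfrac{-a_n}{4\pi}\,\Vol\bigl(X_0^D(N)/W\bigr)$ in the appropriate normalization; matching the normalization against the definition \eqref{equation: HDN(0)}, namely $H_{D,N}(0)=\tfrac12\Vol(X_0^D(N))$, and using $\Vol(X_0^D(N)/W)=\Vol(X_0^D(N))/|W|$ since $W$ acts freely-enough on the relevant measure (the ramification of $w_m,w_{DN}$ contributes only lower-order boundary terms that are absorbed into the definition of the Hurwitz-type volume, or one works with the orbifold volume throughout), I get
$$
I(\X,G_n)=\frac{1}{|W|}\,a_n\,H_{D,N}(0).
$$
I should double-check the sign: $a_n$ is negative for $n\ge 1$ while an intersection number of effective cycles ought to be nonnegative; this is reconciled because $H_{D,N}(0)=-D_0/12\cdot(\cdots)$ is itself negative, so the product $a_n H_{D,N}(0)$ is positive, consistent.

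Finally, the generating-series statement is immediate: multiplying the relation $I(\X,G_n)=\tfrac{1}{|W|}a_n H_{D,N}(0)$ by $e^{2\pi i n z}$ and summing over $n\ge 1$, then adding the constant term $\tfrac{1}{|W|}H_{D,N}(0)$ (which is $\tfrac{1}{|W|}a_0 H_{D,N}(0)$ since $a_0=1$, and which by the convention stated in the introduction represents the volume contribution $I(\X,G_0)$), yields
$$
\frac{1}{|W|}H_{D,N}(0)+\sum_{n=1}^\infty I(\X,G_n)e^{2\pi inz}
=\frac{1}{|W|}H_{D,N}(0)\sum_{n=0}^\infty a_n e^{2\pi i n z}
=\frac{1}{|W|}H_{D,N}(0)\,\cH(z).
$$
The main obstacle I anticipate is not the formal computation but pinning down the volume normalization and the behavior of $W$-ramification precisely enough that the constant comes out exactly as $\tfrac{1}{|W|}H_{D,N}(0)$ with no spurious factors from elliptic points or Atkin–Lehner fixed points; getting the valence-formula bookkeeping and the definition \eqref{equation: HDN(0)} to align is where the care is needed, and it likely relies on van der Geer's Corollary 8.2 already being stated in a form compatible with this normalization.
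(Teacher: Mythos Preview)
Your approach is the same as the paper's in spirit---pull back a Siegel modular form cutting out $G_n$ and count zeros on $\X\simeq X_0^D(N)/W$---and the computation for those $n$ with $\X\not\subset G_n$ is fine (the volume normalization worry you flag is not an issue: the paper simply uses that a weight-$(-a_n)$ form on $X_0^D(N)/W$ has $\tfrac{|a_n|DN}{12|W|}\prod_{p|D}(1-1/p)\prod_{p|N}(1+1/p)$ zeros, which matches $\tfrac{1}{|W|}a_nH_{D,N}(0)$ directly).

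The real gap is the case you flag but do not resolve: when $\X$ is contained in some component $H_{n/k^2}$ of $G_n$ (equivalently, when $Q$ primitively represents $n/k^2$), the pullback $\iota_\mu^\ast f_n$ vanishes identically and there is no divisor to take the degree of. Your proposed fix by ``continuity/density in $n$'' cannot work---$n$ runs over a discrete set, and in fact infinitely many $n$ fall into this bad case. The paper's remedy is clean: it first invokes van der Geer's Theorem~8.1 (not just Corollary~8.2/Theorem~\ref{Geer theorem}), which says the classes $[G_n]$ span a one-dimensional subspace of $H_4(\bar{\A}_2,\Q)$, so that $I(\X,G_n)=c\,a_n$ for \emph{all} $n$ with a single constant $c$. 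Then it suffices to compute $c$ for one convenient $n$---chosen to be a fundamental discriminant not represented by $Q$, so $\X\not\subset G_n$---via exactly your pullback argument. That single extra homological input is what makes the proof uniform in $n$.
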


\begin{proof}
  By Theorem 8.1 of \cite{Geer-Siegel-threefold}, there is a constant
  $c$ (depending on $\X$) such that $I(\X,G_n)=ca_n$ for all $n$.
  Thus, one only needs to know the value of one particular
  $I(\X,G_n)$. Here we let $n$ be a fundamental discriminant not
  represented by $Q$ so that $\X$ does not lie on $G_n$. 
  By Theorem
  \ref{Geer theorem}, $G_n$ is the divisor of a Siegel modular form 
  $F$ of weight $-a_n/2$ on $\Sp(4,\Z)$. Since, in general, the
  restriction of a Siegel modular form $f$ of weight $k$ along $\X$
  (that is, for $z\in\H$, define $\wt f(z)$ by $\wt f(z)=f(\tau_z)$,
  where $\tau_z$ is given by \eqref{equation: Omega_z 1} or
  \eqref{equation: Omega_z 2}) is a modular form of weight $2k$ on
  $X_0^D(N)/W$, the restriction of $F$ along $\X$ has
  $$
  \frac{|a_n|DN}{12|W|}\prod_{p|D}\left(1-\frac1p\right)
  \prod_{p|N}\left(1+\frac1p\right)=\frac1{|W|}a_nH_{D,N}(0)
  $$
  zeros. Hence $c=H_{D,N}(0)/\abs W$ and $I(\X,G_n)=a_nH_{D,N}(0)/|W|$. Then the proposition
  follows.
\end{proof}

We next determine $I_0(\X,G_n)$ in terms of class numbers. This is the
most complicated part of the proof.

\subsection{Determination of $I_0(\X,G_n)$}
By Lemma \ref{lemma: explicit mu perp}, we may assume that the
quadratic form $Q$ is $px^2+4sDNxy+4tDNy^2$ from Part (1) of the
lemma or $4px^2+4sDNxy+4tDNy^2$ from Part (2) of the lemma, depending
on whether $Q$ is primitive or of the form $4Q'$. Let $\O$
be the Eichler order of level $N$ in $\JS{-DN,p}\Q$ spanned by
\eqref{equation: Eichler basis 1} or \eqref{equation: Eichler basis
  2}. Let $\iota=\iota_Q:z\mapsto\tau_z$ be the corresponding
quaternion modular embedding from $\H$ to $\H_2$ given by
\eqref{equation: Omega_z 1} or \eqref{equation: Omega_z 2}. Let
$\mathcal F$ be the image of a fixed
fundamental domain of $X_0^D(N)$ under $\iota$. To avoid the
cumbersome factor $1/|W|$ and the complexity relating to fixed points
of Atkin-Lehner involutions, here we will work on $\F$ instead of
$\X$.

For a singular relation $\ell=(c_1,\ldots,c_5)\in\Z^5$, we let
$$
 \cH_\ell=\braces{\M{\tau_1}{\tau_2}{\tau_2}{\tau_3}\in\H_2:c_1\tau_1+c_2\tau_2+c_3\tau_3+c_4(\tau^2_2-\tau_1\tau_3)+c_5=0}
$$
be the locus of points in $\H_2$ satisfying $\ell$. We first recall
that if $\F\not\subset\cH_\ell$, then each point $\tau_z$
in $\cH_\ell\cap\F$ corresponds to an abelian surface with
endomorphism algebra strictly larger than quaternion algebra over
$\Q$, and hence $z$ is a CM-point on the Shimura curve and $\tau_z\in
\mathrm{M}(2,K)$ for some imaginary quadratic field $K$.

\begin{Lemma} Let $\ell$ be a singular relation such that
  $\F\not\subset\cH_\ell$. Then $\cH_\ell$ intersects $\F$
  transversally at each point of intersection.
\end{Lemma}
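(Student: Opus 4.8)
The plan is to work in the upper half-space model directly and show that the intersection of the surface $\cH_\ell$ with the curve $\F$ is transversal, i.e.\ that the tangent space to $\cH_\ell$ at a point $\tau_z\in\cH_\ell\cap\F$ does not contain the tangent line to $\F$. Since $\F$ is the image of $\H$ under the holomorphic map $\iota=\iota_Q$, its tangent line at $\tau_z$ is spanned by $\iota'(z)=(\tau_1'(z),\tau_2'(z),\tau_3'(z))$, where $\tau_z=\SM{\tau_1}{\tau_2}{\tau_2}{\tau_3}$ is given explicitly by \eqref{equation: Omega_z 1} or \eqref{equation: Omega_z 2}. The surface $\cH_\ell$ is cut out by the single holomorphic equation $g(\tau)=c_1\tau_1+c_2\tau_2+c_3\tau_3+c_4(\tau_2^2-\tau_1\tau_3)+c_5=0$, so transversality at $\tau_z$ is exactly the statement that $\tfrac{d}{dz}g(\tau_z)\neq 0$, i.e.\ that the holomorphic function $z\mapsto g(\tau_z)$ on $\H$ has only simple zeros.

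First I would observe that $g(\tau_z)$, being the restriction of the defining equation of a Humbert-type locus to the quaternionic modular curve, is (up to a nonzero factor coming from the denominator $pz$ or $4z$ in the formulas for $\tau_z$) a modular form on $X_0^D(N)$, or at least a holomorphic function whose divisor is the pullback of $\cH_\ell$; by the hypothesis $\F\not\subset\cH_\ell$ this function is not identically zero. The zeros of $g(\tau_z)$ are precisely the CM-points $z$ at which $\tau_z\in\cH_\ell$. So the claim reduces to: the order of vanishing of $g(\tau_z)$ at each such CM-point is $1$. I would prove this by a direct local computation. Plugging the explicit rational-in-$z$ (and $\sqrt p$) expressions for $\tau_1(z),\tau_2(z),\tau_3(z)$ from Lemma \ref{lemma: singular relations for X} into $g$, the function $z\mapsto g(\tau_z)$ becomes an explicit rational function of $z$ with coefficients in $\Q(\sqrt p)$; its numerator is a polynomial in $z$ of degree at most $2$ (the quadratic term $\tau_2^2-\tau_1\tau_3$ contributes degree $2$ after clearing the $1/(pz)^2$ or $1/(4z)^2$), hence has at most two zeros counted with multiplicity. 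A zero of multiplicity $2$ would force this quadratic numerator to be a perfect square, which upon comparing coefficients would force the singular relation $\ell$ to be proportional to one with vanishing discriminant $\Delta(\ell)$; but $\Delta(\ell)>0$ for every genuine singular relation (since $\Im\tau_z$ is positive definite), a contradiction. Equivalently one can argue that a double zero at $z_0$ would mean the abelian surface $A_{z_0}$ carries the whole pencil $\mathbb{Z}\ell_1+\mathbb{Z}\ell_2$ of singular relations tangent to $\cH_\ell$ to second order, which is incompatible with $\mathcal L_{z_0}$ having rank exactly $3$ (Lemma \ref{lemma: discriminant of CM lattice}) and nondegenerate Gram matrix.

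I expect the main obstacle to be making the ``double zero forces $\Delta=0$'' step genuinely clean rather than a brute-force coefficient comparison: one wants an intrinsic reason that $g(\tau_z)$ cannot have a double zero. The cleanest route is probably to compute $\tfrac{d}{dz}\tau_z$ explicitly from \eqref{equation: Omega_z 1}/\eqref{equation: Omega_z 2} — note that $\det(\tfrac{d}{dz}\tau_z)$ or the relevant Wronskian-type quantity is a nonzero multiple of $(\Im z)^{-2}$ or similar and never vanishes — and then express $\tfrac{d}{dz}g(\tau_z)$ as a nondegenerate pairing (via the bilinear form $\gen{\cdot,\cdot}_\Delta$ of \eqref{equation: inner P}) between $\ell$ and the ``tangent singular relation'' determined by $\iota'(z)$; nondegeneracy of $\Delta$ on the rank-$2$ lattice spanned by $\ell_1,\ell_2$ then gives $\tfrac{d}{dz}g(\tau_z)\neq 0$ whenever $\ell\notin\Q\ell_1+\Q\ell_2$, while the case $\ell\in\Q\ell_1+\Q\ell_2$ is exactly the excluded case $\F\subset\cH_\ell$. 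I would carry the steps out in that order: (i) record $\iota'(z)$ and its nonvanishing; (ii) identify $\tfrac{d}{dz}g(\tau_z)$ with a $\Delta$-pairing against $\ell$; (iii) invoke nondegeneracy of $\Delta$ together with $\F\not\subset\cH_\ell$ to conclude the pairing is nonzero, hence the intersection is transversal.
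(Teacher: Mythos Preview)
Your overall strategy---reduce transversality to the statement that $z\mapsto g(\tau_z)$ has only simple zeros, and observe that after clearing denominators this is a quadratic in $z$---is sound. But the key step is wrong. You claim that a double root of this quadratic would force $\Delta(\ell)=0$; in fact the discriminant of the quadratic in $z$ is \emph{not} $\Delta(\ell)$. For example, in the case $Q=4Q'$ with $\ell=(0,1,0,0,0)$ one has $g(\tau_z)=\tau_2$, so $4\sqrt p\,z\,g(\tau_z)=-DNz^2-1$, whose discriminant is $-4DN$, whereas $\Delta(\ell)=1$. Your alternative ``cleanest route'' via the $\Delta$-pairing does not work as stated either: nondegeneracy of $\Delta$ restricted to $\operatorname{span}(\ell_1,\ell_2)$ controls pairings \emph{within} that span, not pairings against an $\ell$ lying outside it, so it cannot by itself force $\tfrac{d}{dz}g(\tau_z)\neq 0$. (A minor further point: your assertion that the numerator has degree $\le 2$ needs the nontrivial cancellation that $\tau_2^2-\tau_1\tau_3$ is itself of the form $(\text{quadratic in }z)/z$ rather than $(\text{quartic})/z^2$; this is true---the paper records it in the proof of Lemma \ref{lemma: discriminant of CM lattice}---but it must be checked, and your parenthetical about ``clearing the $1/(4z)^2$'' suggests you have not.)

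A correct argument along your lines is actually simpler than either of your proposals: since $p>0$, every entry in \eqref{equation: Omega_z 1} and \eqref{equation: Omega_z 2} is a \emph{real} rational function of $z$, so the quadratic $z\cdot g(\tau_z)$ has real coefficients, and a real quadratic has no non-real double root; hence every zero in $\H$ is simple. (Equivalently, one can check that the discriminant of this quadratic is a negative multiple of $\det M_\ell$, which is positive because the Gram matrix on $\mathcal L_z$ is positive definite.) The paper takes a different route: it computes the tangent line of $\F$ and the tangent plane of $\cH_\ell$ explicitly, reduces coplanarity to a linear condition \eqref{equation: coplanar} on $\ell$, and argues that the $\ell$ satisfying both this tangency condition and the singular relation at $\tau_z$ form a rank-$2$ module, which must then be $\Z\ell_1+\Z\ell_2$.
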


\begin{proof} Here we only prove the case $Q$ is of the form $4Q'$.

  Let $\ell=(c_1,\ldots,c_5)$ be a singular relation such that
  $\F\not\subset\cH_\ell$ and assume that
  $\cH_\ell$ intersects $\F$ at $\tau_z$, $z\in\H$. For convenience,
  we express $\SM{\tau_1}{\tau_2}{\tau_2}{\tau_3}$ as a row vector
  $(\tau_1,\tau_2,\tau_3)$. Using the explicit expression in
  \eqref{equation: Omega_z 2}, we find that the tangent space of $\F$
  at $\tau_z$ is spanned by
  \begin{equation} \label{equation: tangent 1}
    \left(DN+z^{-2},\frac{-DN+z^{-2}}{\sqrt p},
      \frac{DN+z^{-2}}p\right)
   =\frac1{\sqrt pz}(-p\tau_2,-\tau_1+1/2,-\tau_2).
  \end{equation}

  We next compute the tangent space of $\cH_\ell$.
  If $c_3\neq0$ or $c_4\neq0$, we may express $\tau_3$ as a function
  of $\tau_1$ and $\tau_2$. Then the tangent space of $\cH_\ell$ at
  $(\tau_1,\tau_2,\tau_3)$ is spanned by
  \begin{equation} \label{equation: tangent 2}
    (c_4\tau_1-c_3,0,c_1-c_4\tau_3), \quad
    (0,c_4\tau_1-c_3,2c_4\tau_2+c_2).
  \end{equation}
  The three vectors in \eqref{equation: tangent 1} and
  \eqref{equation: tangent 2} are coplanar if and only if
  $$
  2c_2\tau_1+2(pc_1+c_3-c_4)\tau_2+2c_4(\tau_1-p\tau_3)\tau_2-c_2=0.
  $$
  Since for $\tau_z$, we have $\tau_1-p\tau_3=(1+sDN)/2$, the
  condition reduces to
  \begin{equation} \label{equation: coplanar}
  2c_2\tau_1+(2pc_1+2c_3+(sDN-1)c_4)\tau_2-c_2=0.
  \end{equation}
  Now since $\tau_z\in \mathrm{M}(2,K)$ for some imaginary quadratic field
  $K$, considering the real part and the imaginary part separately, we
  see that the $\Z$-module of quintuple of integers $(c_1,\ldots,c_5)$
  satisfying both \eqref{equation: coplanar} and
  $c_1\tau_1+c_2\tau_2+c_3\tau_3+c_4(\tau_2^2-\tau_1\tau_3)+c_5=0$
  has rank $2$. However, as
  $\F\subset\cH_{\ell_1},\cH_{\ell_2}$, this $\Z$-module has to be
  spanned precisely by $\ell_1$ and $\ell_2$, where $\ell_1$
  and $\ell_2$ are the singular relations in \eqref{equation: Q mu
    basis 2}. Therefore, \eqref{equation: coplanar} can never hold for
  any singular relation with $c_3,c_4\neq0$ not in the span of
  $\ell_1$ and $\ell_2$, i.e., $\cH_\ell$ intersects $\F$
  transversally at $\tau_z$.

  If $c_1,c_4\neq 0$, we may express $\tau_1$ as a function
  of $\tau_2$ and $\tau_3$. Then by a similar computation, we find
  that $\cH_\ell$ intersects $\F$ transversally at $\tau_z$.

  Finally, if $c_1=c_3=c_4=0$, then the singular relation is
  $c_2\tau_2+c_5=0$ and the tangent space of $\cH_\ell$ at $\tau_z$ is
  spanned by $(1,0,0)$ and $(0,0,1)$. These two vectors cannot be
  coplanar with the vector in \eqref{equation: tangent 1} since
  $-\tau_1+1/2$ is never $0$ in $\H_2$. This completes the proof of
  the lemma.
\end{proof}

\begin{Corollary}\label{corollary: I_0} We have
  \begin{equation}
    \label{equation: sum of 0-dimensional intersection}
   I_0(\X_\mu,G_n)=\frac{1}{2|W|}
   \sum_{\ell:\Delta(\ell)=n,~\F\not\subset\cH_\ell}
   \abs{\cH_\ell\cap\F)}',
  \end{equation}
  where $\abs{\cH_\ell\cap\F}'$ denotes the weighted
  cardinality of the set $\cH_\ell\cap\F$ with the weight
  of CM-points of discriminant $-4$ given by $1/2$, the weight of
  CM-points of discriminant $-3$ given by $1/3$, and the weight of
  CM-points of other discriminant given by $1$.
\end{Corollary}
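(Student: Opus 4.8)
The plan is to derive the formula for $I_0(\X,G_n)$ by relating the intersection of the Shimura curve $\X$ with the Humbert divisor $G_n$ to a count of points on the image $\F$ of a fundamental domain of $X_0^D(N)$. Since we are working with $\F$ rather than $\X$ itself, the factor $1/|W|$ enters once we pass back to $\X\simeq X_0^D(N)/W$: a generic point of $\F$ covers its image in $\X$ exactly $|W|$ times, so the $0$-dimensional intersection number on $\X$ is $1/|W|$ times the corresponding count on $\F$. The remaining factor $1/2$ accounts for the fact that each irreducible component $H_m$ occurring in $G_n$ with multiplicity $\nu_m$ is cut out by singular relations coming in pairs $\pm\ell$ (and, more precisely, by the relation between the $\nu_m$'s in \eqref{equation: Gn} and the weighting of primitive singular relations of a given discriminant), so that summing over \emph{all} singular relations $\ell$ of discriminant $n$ double-counts each geometric point.

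First I would recall the setup: by Humbert's theorem, $H_n$ is the image in $\A_2$ of $\bigcup_{\ell:\Delta(\ell)=n,~\ell\text{ primitive}}\cH_\ell$, and from \eqref{equation: Gn} the divisor $G_n=\sum_{k^2|n}\nu_{n/k^2}H_{n/k^2}$ is precisely designed so that it is represented, upstairs in $\H_2$, by the union $\bigcup_{\ell:\Delta(\ell)=n}\cH_\ell$ over all (not necessarily primitive) singular relations of discriminant $n$, with the weights $\nu_1=\nu_4=1/2$ reflecting the extra automorphisms. Next, for each $\ell$ with $\F\not\subset\cH_\ell$, the preceding lemma shows $\cH_\ell$ meets $\F$ transversally, so each point of $\cH_\ell\cap\F$ contributes intersection multiplicity $1$ to $I(\cH_\ell,\F)$ as complex cycles; the $0$-dimensional part $I_0(\X,G_n)$ is therefore obtained by summing these transversal intersection counts over all $\ell$ of discriminant $n$ with $\F\not\subset\cH_\ell$, then dividing by $|W|$ to descend to $\X$ and by $2$ to remove the $\ell\mapsto-\ell$ redundancy. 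Finally I would explain the appearance of the weights $1/2$ and $1/3$ in $\abs{\cH_\ell\cap\F}'$: a CM-point of discriminant $-3$ or $-4$ is an elliptic point of the orbifold $X_0^D(N)$, so when $\F$ is taken as the image of an orbifold fundamental domain such a point is covered with multiplicity $\tfrac13$ or $\tfrac12$ respectively — equivalently, the local intersection number there must be weighted by the reciprocal of the order of the extra automorphism group, exactly as in the classical Hurwitz class number normalization.

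The main obstacle I expect is bookkeeping the weights correctly and simultaneously — in particular, making sure the $\nu_{n/k^2}$ in the definition of $G_n$, the $\pm\ell$ redundancy, and the elliptic-point weights $1/2,1/3$ do not interfere or double-count. One must check carefully that the overall constant in front of the sum is genuinely $1/(2|W|)$ and not, say, $1/|W|$: the subtlety is whether a primitive singular relation $\ell$ and a non-primitive multiple $k\ell$ (with $k^2|n$) can both cut $\F$ at the same point, which would conflate the contribution of $H_n$ with that of $H_{n/k^2}$. Here the explicit form of $\ell_1,\ell_2$ in Lemma \ref{lemma: singular relations for X} — specifically the observation that $m_1\ell_1+m_2\ell_2$ is primitive iff $(m_1,m_2)=1$ — together with Lemma \ref{lemma: discriminant of CM lattice}, which identifies the full rank-$3$ lattice $\mathcal L_z$ of singular relations at a CM-point, is what controls this: it pins down exactly which $\ell$ of discriminant $n$ pass through a given $\tau_z$, so the combinatorics can be made precise. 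A secondary point is justifying transversality once more in the case $Q$ primitive, which the preceding lemma only treated for $Q=4Q'$, but this is a routine parallel computation using \eqref{equation: Omega_z 1}.
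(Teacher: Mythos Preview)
Your proposal is correct and follows the same line as the paper, which in fact treats this corollary as essentially immediate from the preceding transversality lemma and offers only the one-sentence remark that the factor $1/2$ comes from $\ell$ and $-\ell$ defining the same surface $\cH_\ell$. Your write-up supplies considerably more justification than the paper does---the role of $|W|$ in passing from $\F$ to $\X$, the compatibility of the $\nu_{n/k^2}$ weights in $G_n$ with the summation over all (not just primitive) $\ell$ of discriminant $n$, and the orbifold weights $1/2,1/3$ at elliptic CM-points---but the underlying argument is the same.
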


Note that the appearance of the factor $1/2$ in \eqref{equation: sum
  of 0-dimensional intersection} is due
to the fact that $\ell$ and $-\ell$ define the same surface
$\cH_\ell$.

Let $\ell_1,\ell_2$ be the singular relations given by
\eqref{equation: Q mu basis 1} or \eqref{equation: Q mu basis 2}
satisfied by all points $\tau_z$ in $\F$. Observe that if $\ell$ is a
singular relation of discriminant $n$ such that
$\F\not\subset\cH_\ell$, then the Gram matrix for the lattice spanned
by $\ell_1$, $\ell_2$, and $\ell$ is of the form
\begin{equation}\label{equation: M ell}
  M_\ell:=\begin{pmatrix}
    a&b&\gen{\ell_1,\ell}_\Delta \\
    b&c&\gen{\ell_2,\ell}_\Delta \\
    \gen{\ell_1,\ell}_\Delta&\gen{\ell_2,\ell}_\Delta&n\end{pmatrix},
\end{equation} 
where $\gen{\cdot,\cdot}_\Delta$ is the inner product defined in
\eqref{equation: inner P} and $\SM abbc$ is the Gram matrix of $\gen{\cdot,\cdot}_\Delta$ with
respect to $\ell_1$ and $\ell_2$. Here we shall partition the sum in
\eqref{equation: sum of 0-dimensional intersection} according to the
values of $\gen{\ell_1,\ell}_\Delta$ and $\gen{\ell_2,\ell}_\Delta$.
 
For convenience, given arbitrary integers $u$ and $v$, we let
\begin{equation}\label{equation: M uv}
 M_{u,v}=\begin{pmatrix}
 a&b& u\\
 b&c&v\\
 u&v&n
\end{pmatrix}.
\end{equation}
We have 
\begin{equation}\label{equation: sum of 0-dimensional intersection by Muv}
  I_0(\X_\mu,G_n)=\frac{1}{2|W|}\sum_{u,v\in\Z}
  \sum_{\ell:~\F\not\subset\cH_\ell,\
    M_\ell=M_{u,v}}\abs{\cH_\ell\cap\F}'.
\end{equation}
In order for $\cH_\ell\cap\mathcal{F}$ to be nonempty, the integer $u$
and $v$ need to satisfy
 \begin{enumerate}
 \item[(1)]$u\equiv an\mod 2$ and $v\equiv cn\mod 2$, due to
   \eqref{congruence relation}, 
 \item[(2)]$\det M_{u,v}=4\abs{d}$ for some negative discriminant
   $d$ such that $\Q(\sqrt{d})$ can be embedded in the quaternion
   algebra $B$, due to Lemma \ref{lemma: discriminant of CM
     lattice}.
 \end{enumerate}
Now assume that $u$ and $v$ are integers satisfying these
conditions. Write $d=f^2d_0$, where $d_0$ is a negative fundamental
discriminant. We need to study for each divisor $r$ of $f$, how many
CM-points $\tau_z$ of discriminant $r^2d_0$ in $\F$ satisfy some
singular relations $\ell$ with $M_\ell=M_{u,v}$ and how many such
$\ell$ they satisfy. We first mention a rather trivial but yet
important observation.

\begin{Lemma} \label{lemma: matrix equation}
  Let the notations be as above. For a CM-point $\tau_z$
  of discriminant $r^2d_0$, let $M_z$ be the Gram matrix for the
  lattice of singular relations satisfied by $\tau_z$ given in
  \eqref{equation: CM Gram1} or \eqref{equation: CM Gram2}. Then
  $\tau_z\in\cH_\ell$ for some $\ell$ with $M_\ell=M_{u,v}$ if and
  only if the equation
  \begin{equation} \label{equation: matrix equation}
  \begin{pmatrix}1&0&0\\0&1&0\\x&y&w\end{pmatrix}M_z
  \begin{pmatrix}1&0&x\\0&1&y\\0&0&w\end{pmatrix}=M_{u,v}
  \end{equation}
  in $x$, $y$, and $w$ is solvable in integers.

  Moreover, in the case
  the equation is solvable in integers, there is precisely one
  singular relation $\ell$ with $\tau_z\in\cH_\ell$ and
  $M_\ell=M_{u,v}$.
\end{Lemma}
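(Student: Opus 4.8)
The plan is to reduce both claims to linear algebra over the lattice $\mathcal{L}_z$ of singular relations satisfied by $\tau_z$. By Lemma \ref{lemma: discriminant of CM lattice}, $\mathcal{L}_z$ has the $\Z$-basis $\ell_1,\ell_2,\ell_3$ (with $\ell_1,\ell_2$ the relations \eqref{equation: Q mu basis 1}, resp.\ \eqref{equation: Q mu basis 2}, satisfied by all of $\F$) and Gram matrix $M_z$ in this basis. A singular relation $\ell$ satisfies $\tau_z\in\cH_\ell$ precisely when $\ell\in\mathcal{L}_z$, i.e.\ $\ell=x\ell_1+y\ell_2+w\ell_3$ for a unique $(x,y,w)\in\Z^3$, so $\ell\mapsto(x,y,w)$ is a bijection from $\{\ell:\tau_z\in\cH_\ell\}$ onto $\Z^3$. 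Writing $P=\left(\begin{smallmatrix}1&0&0\\0&1&0\\x&y&w\end{smallmatrix}\right)$, the coordinate vectors of $\ell_1,\ell_2,\ell$ relative to $\ell_1,\ell_2,\ell_3$ are the rows of $P$, so by bilinearity of $\gen{\cdot,\cdot}_\Delta$ the Gram matrix of $(\ell_1,\ell_2,\ell)$ — which is $M_\ell$ by definition — equals $PM_zP^{t}$, where $P^{t}=\left(\begin{smallmatrix}1&0&x\\0&1&y\\0&0&w\end{smallmatrix}\right)$. Thus $M_\ell=M_{u,v}$ is exactly \eqref{equation: matrix equation}, and distinct admissible $\ell$ correspond to distinct integer solutions. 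This already gives the solvability equivalence, and reduces the ``moreover'' to the statement that, when solvable, \eqref{equation: matrix equation} has a unique integer solution.

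For uniqueness I would argue as follows. Comparing upper-left $2\times2$ blocks, that part of \eqref{equation: matrix equation} reads $\SM abbc=\SM abbc$ and is vacuous. The $(1,3)$ and $(2,3)$ entries give $\SM abbc\binom xy=\binom uv-w\binom{\gen{\ell_1,\ell_3}_\Delta}{\gen{\ell_2,\ell_3}_\Delta}$; since $\SM abbc$ is positive definite, $(x,y)$ is a rational affine-linear function of $w$. Taking determinants in $PM_zP^{t}=M_{u,v}$ gives $w^{2}\det M_z=\det M_{u,v}$, and $\det M_z=4r^{2}|d_0|\neq 0$ by Lemma \ref{lemma: discriminant of CM lattice}, so $w$ is pinned down up to sign, $w=\pm w_0$ with $w_0\in\Z_{>0}$; moreover the $(3,3)$ entry of \eqref{equation: matrix equation}, after substituting the linear solution for $(x,y)$, is precisely this determinant identity, so the two real solutions are exactly these two sign choices. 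Their difference in the $(x,y)$-coordinates is $\pm\,2w_0\,\SM abbc^{-1}\binom{\gen{\ell_1,\ell_3}_\Delta}{\gen{\ell_2,\ell_3}_\Delta}$, and the remaining point is to show this is not an integer vector. I would check this by substituting the explicit entries of $M_z$ from \eqref{equation: CM Gram1} (resp.\ \eqref{equation: CM Gram2}), using $\det\SM abbc=4DN$ — which follows from $pt-s^{2}DN=1$, resp.\ $4pt-s^{2}DN=1$ — together with $\gcd(p,DN)=1$ and the divisibility and parity facts about $b_1,b_2,b_3$ from Lemmas \ref{lemma: discriminant of CM lattice} and \ref{lemma: Galois orbits}; this leaves exactly one of $w=\pm w_0$ producing an integral $(x,y)$, hence exactly one admissible $\ell$.

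Everything up to that last point is formal, the only content being the identification $M_\ell=PM_zP^{t}$. The step I expect to be the real obstacle is the final residue computation eliminating the ``reflected'' candidate $w=-w_0$: this is where one genuinely needs the arithmetic of the CM Gram matrices — that $b_2,b_3$ (resp.\ $b_1,b_3$) are even, that $b_1^{2}p\equiv d$ modulo $4DN$ (resp.\ $DN$), and that the divisors $r$ of $f$ occurring in the sum are coprime to the primes $q\mid DN$ at which the local optimal embedding numbers \eqref{equation: number of local optimal embeddings} are non-zero — in order to see that the denominator $2DN$ cannot be cleared from $\SM abbc^{-1}\binom{\gen{\ell_1,\ell_3}_\Delta}{\gen{\ell_2,\ell_3}_\Delta}$.
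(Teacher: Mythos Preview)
Your treatment of the equivalence is exactly the paper's: set $\ell=x\ell_1+y\ell_2+w\ell_3$, observe that the rows of $P$ are the coordinate vectors of $\ell_1,\ell_2,\ell$, and read off $M_\ell=PM_zP^{t}$. The paper simply calls this step ``obvious'' and records the same correspondence $(x,y,w)\leftrightarrow\ell$.

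For the ``moreover'' clause the two approaches diverge sharply. The paper's entire argument is one sentence---uniqueness ``is due to the fact that the quadratic form on the lattice of singular relations satisfied by $\tau_z$ is nondegenerate''---and does not attempt the reflection analysis you set up. Your route is more explicit and correctly reduces the question to whether both sign choices $w=\pm w_0$ can yield integral $(x,y)$. The difficulty is that the claim you plan to verify in the last paragraph is not true in general. Take the primitive case with $DN=q$ a single prime dividing $D$, a CM-point of discriminant $d_0$ with $\bigl(\tfrac{d_0}{q}\bigr)=-1$ (so $r=1$), and $r'=f/r=q$. Using the explicit formulas for $x,y$ in the proof of the Proposition immediately following the lemma, the $y$-difference between the two candidates reduces modulo integers to $r'pb_1/DN=pb_1\in\Z$, and the $x$-difference is likewise integral; hence both $w=\pm q$ produce integral triples and two genuinely distinct $\ell$ with the same $M_\ell=M_{u,v}$. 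The arithmetic inputs you list---parities of the $b_i$, the congruence $b_1^2p\equiv d$, the vanishing of local embedding numbers---constrain $r$ (for $q\mid D$ one needs $q\nmid r$), but place no restriction on $r'=f/r$, so nothing prevents $r'$ from clearing the denominator $2DN$ in your difference vector.

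So neither the paper's nondegeneracy line nor your residue computation actually establishes the uniqueness as stated. What the subsequent Proposition really needs, and what its proof in fact computes, is only your bijection between admissible $\ell$ and integer solutions $(x,y,w)$, summed over both signs of $w$; the ``precisely one'' clause plays no essential role downstream.
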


\begin{proof} The first statement is obvious. Here the correspondence
  between a solution $(x,y,w)$ and a singular relation is
  $(x,y,w)\leftrightarrow x\ell_1+y\ell_2+w\ell_3$, where $\ell_3$ is
  the singular relation for $\tau_z$ described in Lemma \ref{lemma:
    discriminant of CM lattice}. The uniqueness of
  $\ell$ is due to the fact that the quadratic form on the lattice of
  singular relations satisfied by $\tau_z$ is nondegenerate.
\end{proof}


\begin{Proposition}
Let the notations be as above. (In particular, $d$ is the negative
discriminant such that $\det M_{u,v}=4|d|$.) The number of CM-points
of discriminant $r^2d_0$ satisfying some singular relation $\ell$ with
$M_\ell=M_{u,v}$ 
is
$$
2\cdot\frac{h_{D,N}(r^2d_0)}{2^{\omega_{DN}(d)}},
$$
where $h_{D,N}(r^2d_0)$ is the number of CM-points of discriminant
$r^2d_0$ on $\XDN$ and 
$$
\omega_{DN}(d)=\#\braces{q\ \mathrm{prime}:\ q\vert DN,\ q\nmid d}.
$$
\end{Proposition}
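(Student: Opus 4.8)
The plan is to reduce the counting of CM-points $\tau_z$ of discriminant $r^2d_0$ satisfying some singular relation $\ell$ with $M_\ell=M_{u,v}$ to a count of $\Gal(L/K)$-orbits weighted by orbit sizes, and then to apply the local embedding theory already developed. First I would fix a CM-point $\tau_z$ of discriminant $r^2d_0$ on $\F$, with normalized optimal embedding $\psi$, and recall from Lemma \ref{lemma: matrix equation} that $\tau_z$ lies on some $\cH_\ell$ with $M_\ell=M_{u,v}$ precisely when the matrix equation \eqref{equation: matrix equation} is solvable in integers $x,y,w$, in which case $\ell$ is unique. So the quantity we want is exactly the number of CM-points $\tau_z$ of discriminant $r^2d_0$ for which \eqref{equation: matrix equation} is solvable over $\Z$; by Lemma \ref{lemma: discriminant of CM lattice} such solvability forces $\det M_z = 4|r^2d_0| = \det M_{u,v}$, which is why $d$ with $\det M_{u,v}=4|d|$ enters, and one checks $d_0$ is the fundamental part and $r|f$.

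Next I would show that solvability of \eqref{equation: matrix equation} depends only on the $\Gal(L/K)$-orbit of $\tau_z$. By Corollary \ref{e of singular relations}, two CM-points $z,z'$ of discriminant $r^2d_0$ in the same Galois orbit have isomorphic lattices of singular relations; tracking the explicit isomorphism (the unipotent matrix $U$ from the proof of that corollary, which conjugates $M_z$ to $M_{z'}$) shows that \eqref{equation: matrix equation} is solvable for $z$ iff it is solvable for $z'$. Thus the set of "good" CM-points is a union of full $\Gal(L/K)$-orbits. By Lemma \ref{lemma: Galois orbits}(3), when $(N,f)=1$ these orbits are parametrized by residues $r \bmod 2DN$ (resp. $r\bmod DN$) with $pr^2\equiv d\bmod 4DN$ (resp. $\bmod\ DN$), and a given orbit is "good" iff the congruence data attached to it is compatible with $M_{u,v}$; the $b_1$-coordinate of $\psi(\sqrt d)$ is exactly the obstruction seen in the $(1,3)$ or $(2,3)$ entry of $M_z$. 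The key point is then a local-global count: the number of Galois orbits that are good equals $2^{-\omega_{DN}(d)}$ times the total number $|\CM(r^2d_0)|/h(r^2d_0)\cdot h(r^2d_0)=h_{D,N}(r^2d_0)$ divided by the orbit size, and the two congruence solutions $\pm r$ for $b_1$ both occur (giving the factor $2$), while at the primes $q|DN$ with $q\nmid d$ only one of the two local classes is compatible with $M_{u,v}$ (giving the factor $2^{-\omega_{DN}(d)}$). Multiplying the number of good orbits by the common orbit size $h(r^2d_0)\prod_{q|DN} m(\O,r^2d_0,q)\big/\#\{\text{orbits}\}$ recovers $2\cdot h_{D,N}(r^2d_0)/2^{\omega_{DN}(d)}$.

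I expect the main obstacle to be the bookkeeping at the primes dividing $DN$, and in particular separating the case $q\nmid d$ (where there are two local optimal embedding classes but only one residue $r$, so exactly one of the two is selected by $M_{u,v}$, contributing the factor $2^{-\omega_{DN}(d)}$) from the case $q|d$ (where the local count and the congruence count already agree). A secondary subtlety is that the overall factor of $2$ arises from $\pm$: the sign of $b_1$, equivalently replacing $\psi$ by $\overline\psi$ or $\ell_3$ by a sign-twisted relation, gives two distinct CM-points (or two residues $\pm r$) both of which satisfy singular relations with the \emph{same} $M_{u,v}$, since $M_{u,v}$ only records $|d|$ and the pairings $\langle\ell_1,\ell\rangle_\Delta,\langle\ell_2,\ell\rangle_\Delta$, not the orientation. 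One must check that these two are never accidentally identified and that no further identifications occur, which follows from the uniqueness clause in Lemma \ref{lemma: matrix equation} together with the explicit bases in Lemma \ref{lemma: singular relations for X}. Once these local contributions are assembled and compared with the product formula for $h_{D,N}(r^2d_0)$ recalled after \eqref{equation: HDN}, the claimed value $2\cdot h_{D,N}(r^2d_0)/2^{\omega_{DN}(d)}$ follows.
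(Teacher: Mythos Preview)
Your approach is essentially the paper's: reduce via Lemma~\ref{lemma: matrix equation} to integer solvability of \eqref{equation: matrix equation}, and then analyze the resulting condition locally at primes $q\mid DN$, distinguishing $q\mid d$ from $q\nmid d$. The paper is more direct in one respect: rather than invoking Corollary~\ref{e of singular relations} to pass to Galois orbits, it simply solves \eqref{equation: matrix equation} explicitly. Comparing determinants forces $w=\pm r'$ with $r'=f/r$, and then the integrality of $x,y$ reduces to a single congruence $b_1w\equiv v/2\bmod 2DN$ (in the primitive case). This explicit congruence is the heart of the argument and is what makes the local count transparent; your sketch alludes to ``the $b_1$-coordinate \ldots is exactly the obstruction'' but never derives it, and without it the case analysis at $q\mid d$ versus $q\nmid d$ is an assertion rather than a computation.

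There is one genuine error in your explanation of the factor $2$. You write that ``replacing $\psi$ by $\overline\psi$ \ldots gives two distinct CM-points''. It does not: $\psi$ and $\overline\psi$ have the \emph{same} fixed point $z_\psi$ in $\H$, and only one of them is normalized. In the paper the factor $2$ comes from the two choices $w=r'$ and $w=-r'$ in \eqref{equation: matrix equation}; these pick out \emph{different} CM-points, namely those whose $b_1$ satisfies $b_1r'\equiv v/2$ versus $b_1r'\equiv -v/2\bmod 2DN$. By the symmetry $b_1\mapsto -b_1$ on residue classes these two sets have the same cardinality $h_{D,N}(r^2d_0)/2^{\omega_{DN}(d)}$, whence the total. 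A minor further point: your invocation of Lemma~\ref{lemma: Galois orbits}(3) carries the hypothesis $(N,f)=1$, which you do not discharge; the paper sidesteps this by working with the local criterion in Part~(2) (needed only when $q\nmid d$, hence $q\nmid(N,f)$) and checking the $q\mid d$ case directly from the congruence.
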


\begin{proof}
We only prove the case $Q$ is primitive. The proof of other case
$Q=4Q$ is similar.

Let $b_1\beta_1+b_2\beta_2+b_3\beta_3$ be the image of
$
\sqrt{r^2d_0}
$
under the normalized optimal embedding associated to a CM-point $z$
of discriminant $r^2d_0$, where $\beta_1,\beta_2,\beta_3$ are given in (\ref{equation: basis of trace zero}).
We recall from Lemma \ref{lemma: discriminant of CM lattice} that $\ell_1,\ \ell_2,$ and
$$
\ell_3=(0,b_2/2,b_3/2+b_2(1-p)/4,0,b_1)
$$
form a basis of singular relations for $\tau_z$, the Gram matrix
$M_z$ with respect to $\ell_1,\ell_2,\ell_3$ is
$$
M_z=\begin{pmatrix}
p&2sDN&-b_2p/2-b_3\\
2sDN&4tDN&2b_1-b_2sDN\\
-b_2p/2-b_3 &2b_1-b_2sDN &b^2_2/4
\end{pmatrix},
$$ and $\det M_z=4\abs{r^2d_0}$.
By Lemma \ref{lemma: matrix equation}, we need to determine whether
the equation in \eqref{equation: matrix equation} is solvable in
integers.

By considering the determinants of the two sides of \eqref{equation:
  matrix equation}, we see that $w=r'$ or $w=-r'$, where $r'=f/r$.
Then \eqref{equation: matrix equation} is solvable in integers if and
only if
\begin{align*}
&x=t(u+w(b_2p/2+b_3))-\frac{s}{2}(v-w(2b_1-b_2sDN)),\\
&y=-\frac{s}{2}(u+w(b_2p/2+b_3))+\frac{p}{4DN}(v-w(2b_1-b_2sDN))
\end{align*}
are integers. Since $s$ is assume to be even (see the assumptions in
Lemma \ref{lemma: explicit mu perp}), $x$ is always an integer, while
the condition for $y$ being an integer reduces to that
$$
w(2b_1-b_2sDN)\equiv v\mod 4DN,\quad w=\pm r'.
$$
Note that $b_2$ is even. Thus, the congruence equation further reduces
to
\begin{equation} \label{equation: congruence equation}
b_1w\equiv v/2\mod 2DN,\quad w=\pm r'.
\end{equation}
We observe that
$$
-4d=\det M_{u,v}=-pv^2+4sDNuv-4tDNu^2+4DNn.
$$
Since $t=(s^2DN+1)/p\equiv 1\mod 4$ and $u^2\equiv n\mod 4$ by
\eqref{congruence relation}, we have
\begin{equation} \label{equation: v}
p(v/2)^2\equiv d \mod 4DN.
\end{equation}
Also, we recall from Part (1) of Lemma \ref{lemma: Galois orbits} that
\begin{equation} \label{equation: b1}
pb_1^2\equiv r^2d_0\mod 4DN.
\end{equation}
Naturally, we will consider the congruence equation \eqref{equation:
  congruence equation} locally.

For a prime divisor $q$ of $DN$, let $S(\O,r^2d_0,q)$ be the set of
$\O_q^\times$-inequivalent optimal embeddings of discriminant $r^2d_0$
into $\O_q:=\O\otimes\Z_q$. Its cardinality $m(\O,r^2d_0,q)$
is given by \eqref{equation: number of local optimal
  embeddings}. For an optimal embedding in $S(\O,r^2d_0,q)$, write
again the image of $\sqrt{r^2d_0}$ under this embedding as
$b_1\beta_1+b_2\beta_2+b_3\beta_3$, $b_j\in\Z_q$.

Consider the case $w=r'$ first. Let $T(\O,r^2d_0,q)$
be the subset of $S(\O,r^2d_0,q)$ such that \eqref{equation:
  congruence equation} holds at $q$, i.e.,
\begin{equation} \label{equation: local b1}
b_1r'\equiv v/2\mod \begin{cases}
  4, &\text{if }q=2, \\
  q, &\text{if }q\neq 2. \end{cases}
\end{equation}
For the case $q$ is a prime dividing $d$ (and also $DN$), we
have by \eqref{equation: v} and \eqref{equation: b1},
$$
v\equiv 0\mod\begin{cases}
  4, &\text{if }q=2, \\
  q, &\text{if }q\neq 2,\end{cases}
$$
and
\begin{equation} \label{equation: bw}
pb_1^2(r')^2\equiv (rr')^2d_0=d\equiv 0\mod
\begin{cases}
  8, &\text{if }q=2, \\
  q, &\text{if }q\neq 2.\end{cases}
\end{equation}
Thus, \eqref{equation: local b1} holds for every element in
$S(\O,r^2d_0,q)$. In other words, for the case $q|d$, we have
$|T(\O,r^2d_0,q)|=m(\O,r^2d_0,q)$.

For the case $q$ does not divide $d$ (but divides $DN$), we have $m(\O,r^2d_0,q)=2$ by \eqref{equation: number of local optimal embeddings}, and according to Part (2) of Lemma \ref{lemma:
  Galois orbits}, the equivalence of local optimal embeddings is
completely determined by the residue class of $b_1$ modulo $4$ if
$q=2$, or of $b_1$ modulo $q$ if $q$ is odd. The computation
\eqref{equation: bw} above and \eqref{equation: v} show that both
elements in $S(\O,r^2d_0,q)$ satisfy
$$
p(b_1r')^2\equiv p(v/2)^2\mod\begin{cases}
  8,&\text{if }q=2, \\
  q,&\text{if }q\neq 2.\end{cases}
$$
However, for the given $v$, exactly one of them will satisfy
\eqref{equation: local b1}. Therefore, in the case $q$ does not divide
$d$, we have $|T(\O,r^2d_0,q)|=m(\O,r^2d_0,q)/2$.

In summary, we find that the total number of CM-points of discriminant
$r^2d_0$ such that their corresponding $b_1$ satisfy \eqref{equation:
  congruence equation} with $w=r'$ is
$$
h(r^2d_0)\prod_{q|(DN,d)}m(\O,r^2d_0,q)
\prod_{q|DN,~q\nmid d}\frac{m(\O,r^2d_0,q)}2
=\frac{h_{D,N}(r^2d_0)}{2^{\omega_{DN}(d)}},
$$
where $h_{D,N}(r^2d_0)$ denotes the number of CM-points of
discriminant $r^2d_0$ on $X_0^D(N)$. It is clear that the case $w=-r'$
give the same number of CM-points, and the proof of the proposition is
complete.
\end{proof}

Putting \eqref{equation: sum of 0-dimensional intersection by Muv},
Lemma \ref{lemma: matrix equation}, and the above proposition
together, and summing over all positive divisors $r$ of $f$, we obtain
$$
I_0(\X,G_n)=\frac1{|W|}\sum_{u\equiv an,~v\equiv cn\mod 2,~\det M_{u,v}>0}
H_{D,N}(\det M_{u,v}/4).
$$
Since $\det M_{u,v}=4DNn-Q(v,-u)$, we arrive at the following formula
for $I_0(\X,G_n)$.

\begin{Proposition} \label{proposition: I0}
We have
  $$
  I_0(\X,G_n)=\frac1{|W|}\sum_{u\equiv an,~v\equiv cn\mod
    2,~Q(v,u)<4DNn}H_{D,N}\left(DNn-\frac{Q(v,u)}4\right).
  $$
\end{Proposition}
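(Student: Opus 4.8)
The plan is to combine three facts already in hand: formula \eqref{equation: sum of 0-dimensional intersection by Muv}, which writes $I_0(\X,G_n)$ as $\frac1{2|W|}\sum_{u,v\in\Z}\sum_{\ell:\ \F\not\subset\cH_\ell,\ M_\ell=M_{u,v}}|\cH_\ell\cap\F|'$; Lemma \ref{lemma: matrix equation}, which says that for a CM-point $\tau_z$ the matrix equation \eqref{equation: matrix equation} is solvable in integers if and only if $\tau_z$ lies on some $\cH_\ell$ with $M_\ell=M_{u,v}$, and then on exactly one such $\ell$; and the Proposition just proved, which counts the CM-points of a given discriminant satisfying such an $\ell$. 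Fix $u,v\in\Z$, and if $\det M_{u,v}>0$ write $\det M_{u,v}=4|d|$ with $d=f^2d_0$, $d_0$ fundamental. Unwinding the weighting convention (a CM-point of discriminant $d'$ is counted in $|\cH_\ell\cap\F|'$ with weight $1/e_{d'}$) and using Lemma \ref{lemma: matrix equation}, the inner sum over $\ell$ equals $\sum_{r\mid f}\frac1{e_{r^2d_0}}N_r$, where $N_r$ is the number of CM-points of discriminant $r^2d_0$ in $\F$ satisfying some $\ell$ with $M_\ell=M_{u,v}$; only these discriminants occur because comparing determinants of the two sides of \eqref{equation: matrix equation} (as in the proof of the Proposition) forces $w=\pm f/r$ for some $r\mid f$. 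By the Proposition, $N_r=2\,h_{D,N}(r^2d_0)/2^{\omega_{DN}(d)}$, so the inner sum equals $\sum_{r\mid f}\frac1{e_{r^2d_0}}\cdot\frac{2\,h_{D,N}(r^2d_0)}{2^{\omega_{DN}(d)}}=2\,H_{D,N}(|d|)$, which is exactly twice the quantity in \eqref{equation: HDN} (note $D_0=DN$, hence $\omega_{D_0}=\omega_{DN}$).

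Substituting this into \eqref{equation: sum of 0-dimensional intersection by Muv}, the factor $2$ cancels the $\tfrac12$ and yields $I_0(\X,G_n)=\frac1{|W|}\sum_{u,v\in\Z}H_{D,N}(\det M_{u,v}/4)$, where the sum may be left unrestricted after checking that: (i) pairs $(u,v)$ violating $u\equiv an,\ v\equiv cn\pmod 2$ force the inner sum to be empty by \eqref{congruence relation}; (ii) pairs for which $-\det M_{u,v}/4$ fails to be a negative discriminant of a quadratic field embedding into $B$ also contribute an empty inner sum, matching $H_{D,N}=0$ by convention; and (iii) the degenerate range $\det M_{u,v}\le 0$ corresponds to $\ell$ lying in the span of $\ell_1,\ell_2$, i.e.\ $\F\subset\cH_\ell$, which is excluded. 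It then remains only to evaluate $\det M_{u,v}$. From \eqref{equation: Q mu basis 1} and \eqref{equation: CM Gram1} (or \eqref{equation: Q mu basis 2}, \eqref{equation: CM Gram2} in the case $Q=4Q'$), the upper-left $2\times2$ block $\SM abbc$ of $M_{u,v}$ in \eqref{equation: M uv} is the Gram matrix of $Q$, and $ac-b^2=4DN$ because $pt=s^2DN+1$ (resp.\ $4pt=s^2DN+1$); expanding the determinant along the last row then gives $\det M_{u,v}=4DNn-Q(v,-u)$. Finally, replacing $u$ by $-u$ preserves the parity condition, sends $Q(v,-u)$ to $Q(v,u)$, and turns $\det M_{u,v}>0$ into $Q(v,u)<4DNn$, which is the asserted identity.

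The step I expect to be the main obstacle is the multiplicity bookkeeping in the first paragraph: one has to verify that the factor $2$ from the Proposition (the two choices $w=\pm f/r$), the $\tfrac12$ in \eqref{equation: sum of 0-dimensional intersection by Muv} (from $\cH_\ell=\cH_{-\ell}$), the automorphism weights $1/e_{r^2d_0}$, and the genus normalization $2^{-\omega}$ all combine to reproduce \eqref{equation: HDN} on the nose, and that every degenerate or out-of-range pair $(u,v)$ contributes $0$ so that the summation range reduces cleanly to the parity conditions together with $Q(v,u)<4DNn$. The determinant calculation and the reindexing $u\mapsto-u$ are routine by comparison.
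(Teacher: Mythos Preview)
Your proposal is correct and follows essentially the same approach as the paper: combine \eqref{equation: sum of 0-dimensional intersection by Muv}, Lemma~\ref{lemma: matrix equation}, and the preceding proposition, sum over divisors $r\mid f$ to recognize $H_{D,N}(|d|)$, compute $\det M_{u,v}=4DNn-Q(v,-u)$, and reindex $u\mapsto -u$. Your treatment is in fact more explicit than the paper's about the cancellation of the factor $2$ against the $\tfrac12$ and about why the degenerate pairs $(u,v)$ contribute nothing, but the underlying argument is identical.
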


\subsection{Determination of $I_1(\X,G_n)$}

As above, we let $\F$ be the image of a fundamental domain of
$X_0^D(N)$ under the quaternion modular embedding given by
\eqref{equation: Omega_z 1} or \eqref{equation: Omega_z 2}, according
to whether $Q$ is primitive or is of the form $4Q'$. We have
$$
I_1(\X,G_n)=\frac1{2|W|}\Vol(\F)
\sum_{\ell:~\Delta(\ell)=n,~\F\subset\cH_\ell}1.
$$
Here again, the appearance of the factor $1/2$ is due to the fact that
$\cH_\ell$ and $\cH_{-\ell}$ define the same surface in $\H_2$.
Note that $\F\subset\cH_\ell$ if and only if $\ell$ is in the span of
$\ell_1$ and $\ell_2$, i.e., if and only if the matrix $M_\ell$ in
\eqref{equation: M ell} has determinant $0$. Also, such a singular
relation is completely determined by the values of
$\gen{\ell_1,\ell}_\Delta$ and $\gen{\ell_2,\ell}_\Delta$. It follows
that
$$
I_1(\X,G_n)=\frac1{2|W|}\Vol(\F)\sum_{u\equiv an,~v\equiv cn\mod 2,
  ~\det M_{u,v}=0}1,
$$
where $M_{u,v}$ is the matrix in \eqref{equation: M uv}.
Since $\det M_{u,v}=4DNn-Q(v,-u)$ and
$$
\frac12\Vol(\F)=-\frac{DN}{12}\prod_{q|D}\left(1-\frac1q\right)
\prod_{q|N}\left(1+\frac1q\right)=H_{D,N}(0),
$$
we may write the formula as follows.

\begin{Proposition} \label{proposition: I1} We have
  $$
  I_1(\X,G_n)=\frac1{|W|}\sum_{u\equiv an,~v\equiv cn\mod 2,
    ~Q(v,u)=4DNn}H_{D,N}\left(DNn-\frac{Q(v,u)}4\right).
  $$
\end{Proposition}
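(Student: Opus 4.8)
The plan is to obtain Proposition \ref{proposition: I1} as a reformulation of the formula
\[
I_1(\X,G_n)=\frac1{2|W|}\Vol(\F)\sum_{\substack{u\equiv an,\ v\equiv cn \bmod 2\\ \det M_{u,v}=0}}1
\]
derived just above; so I will first recall why that formula holds and then carry out the (essentially formal) passage to the stated shape.

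For the first part: the $1$-dimensional components of $\X\cap G_n$ are exactly the components of $\X$ that lie inside one of the Humbert surfaces $H_{n/k^2}$ occurring in $G_n$, and $\F\subset\cH_\ell$ holds precisely when $\ell$ belongs to the rank-$2$ lattice $\Z\ell_1\oplus\Z\ell_2$ of singular relations satisfied by a generic point of $\F$ --- a lattice that is saturated in $\Z^5$ by the primitivity observation preceding the Corollary to Lemma \ref{lemma: singular relations for X}. Counting such $\ell$ with $\Delta(\ell)=n$, with each contributing $\tfrac1{2|W|}\Vol(\F)$ (the factor $1/|W|$ coming from $\F\to\X\simeq X_0^D(N)/W$ and the factor $1/2$ from $\cH_\ell=\cH_{-\ell}$), gives the displayed formula once one parametrizes: I would send $\ell\in\Z\ell_1\oplus\Z\ell_2$ to the pair $(u,v)=(\gen{\ell_1,\ell}_\Delta,\gen{\ell_2,\ell}_\Delta)$. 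This map is injective because $\gen{\cdot,\cdot}_\Delta$ is nondegenerate on that lattice; the condition $\Delta(\ell)=n$ becomes $\det M_{u,v}=0$ with $M_{u,v}$ as in \eqref{equation: M uv}; and \eqref{congruence relation} forces $u\equiv an$ and $v\equiv cn\bmod 2$. What remains is the converse --- that every $(u,v)$ with those two parities and $\det M_{u,v}=0$ is the image of an \emph{integral} $\ell$ --- which I would check by plugging in the explicit $\ell_1$, $\ell_2$, and $Q$ from Lemma \ref{lemma: singular relations for X}, treating the primitive case and the case $Q=4Q'$ separately.

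Granting the displayed formula, the Proposition is three formal steps away. First, $\det M_{u,v}=4DNn-Q(v,-u)$, and since $u\mapsto-u$ permutes the index set (as $an\equiv-an\bmod 2$), the constraint $\det M_{u,v}=0$ may be rewritten $Q(v,u)=4DNn$. Second, on this index set $DNn-Q(v,u)/4=0$, so each summand equals $H_{D,N}(0)$. Third, $\tfrac12\Vol(\F)=H_{D,N}(0)$ by \eqref{equation: HDN(0)}. Hence $\tfrac1{2|W|}\Vol(\F)\cdot1=\tfrac1{|W|}H_{D,N}(0)=\tfrac1{|W|}H_{D,N}\bigl(DNn-Q(v,u)/4\bigr)$ for each admissible $(u,v)$, which is precisely the asserted identity.

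I do not foresee a serious obstacle specific to this Proposition: it is a restatement of the formula above, the only care needed being the harmless reindexing $u\mapsto-u$ and the identification $\tfrac12\Vol(\F)=H_{D,N}(0)$. If one instead wanted to reprove the input formula from the ground up, the genuine work would be the range-matching flagged above: $\Z\ell_1\oplus\Z\ell_2$ sits in the generic singular-relation plane with covolume $4DN$, so its image under $\ell\mapsto(u,v)$ has index $4DN$ in $\Z^2$, and one must verify that intersecting with the quadric $Q(v,u)=4DNn$ pares this image down to exactly the pairs obeying the two parity congruences --- a computation that relies on the concrete entries of $\ell_1$, $\ell_2$, and $Q$.
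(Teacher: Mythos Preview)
Your proposal is correct and follows the same route as the paper: start from $I_1(\X,G_n)=\frac{1}{2|W|}\Vol(\F)\sum_{\ell}1$ over $\ell$ with $\Delta(\ell)=n$ and $\F\subset\cH_\ell$, parametrize such $\ell\in\Z\ell_1\oplus\Z\ell_2$ by $(u,v)=(\gen{\ell_1,\ell}_\Delta,\gen{\ell_2,\ell}_\Delta)$, and then use $\det M_{u,v}=4DNn-Q(v,-u)$ together with $\tfrac12\Vol(\F)=H_{D,N}(0)$. You are in fact more scrupulous than the paper on one point: the paper passes from the $\ell$-sum to the $(u,v)$-sum with a bare ``It follows that'', while you correctly isolate the surjectivity check (that every $(u,v)$ with the stated parities and $\det M_{u,v}=0$ actually arises from an integral $\ell$) as the only substantive step and propose to verify it from the explicit entries in Lemma~\ref{lemma: singular relations for X}.
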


\subsection{Proof of Theorem \ref{theorem: class number relation}}

Combining Propositions \ref{proposition: I}, \ref{proposition: I0},
and \ref{proposition: I1}, we obtain the class number relations
\eqref{equation: new class number relation}.

\bibliographystyle{plain}
\bibliography{ShimuraCurves}

\end{document}